\renewcommand*\env@matrix[1][*\c@MaxMatrixCols c]{%
  \hskip -\arraycolsep
  \let\@ifnextchar\new@ifnextchar
  \array{#1}}
\newtheorem{definition}{Definition}
\newtheorem{theorem}{Theorem}
\newtheorem{lemma}{Lemma}
\begin{document}
%
\title{ADMM for Distributed Dynamic Beam-forming}
%
%
%

\author{Marie~Maros,~\IEEEmembership{Student Member,~IEEE,}
        and~Joakim~Jald\'{e}n,~\IEEEmembership{Senior Member,~IEEE}
\thanks{M. Maros and J. Jald\'{e}n are with the Department of Signal Processing, KTH Royal Institute of Technology, Stockholm, Sweden}
}

%
%

\markboth{Submitted to IEEE Transactions on Signal and Information Processing over Networks}%
{Shell \MakeLowercase{\textit{et al.}}: Bare Demo of IEEEtran.cls for Journals}
%



\maketitle

\begin{abstract}
This paper shows the capability the alternating direction method of multipliers (ADMM) has to track, in a distributed manner, the optimal down-link beam-forming solution in a multiple input multiple output (MISO) multi-cell network given a dynamic channel. Each time the channel changes, ADMM is allowed to perform one algorithm iteration. In order to implement the proposed scheme, the base stations are not required to exchange channel state information (CSI), but will require to exchange interference values once. We show ADMM's tracking ability in terms of the algorithm's Lyapunov function given that the primal and dual solutions to the convex optimization problem at hand can be understood as a continuous mapping from the problem's parameters. We show that this holds true even considering that the problem looses strong convexity when it is made distributed. We then show that these requirements hold for the down-link, and consequently up-link, beam-forming case. Numerical examples corroborating the theoretical findings are also provided.
\end{abstract}

\begin{IEEEkeywords}
Alternating direction method of multipliers (ADMM), dencentralized optimization, dynamic optimization, MIMO, multi-cell wireless networks, second-order cone programming (SOCP)
\end{IEEEkeywords}

\IEEEpeerreviewmaketitle

%
\IEEEpeerreviewmaketitle

%
%

\section{Introduction}

Coordinated transmissions in multi-cell communication networks has in the recent years drawn great attention due to the promise of significantly higher spectral efficiencies \cite{Rt2006, Lodhi2006}. Such techniques include both multi-point cooperative techniques where mobile users are simultaneously served by several base stations \cite{Lodhi2006}, and inter-cell interference mitigating techniques where base stations coordinate to limit interference to neighboring cells \cite{Dahrouj2010}.

Coordinated transmissions place larger requirements on the availability of accurate channel state information (CSI) throughout the network, and these requirements are often the major hurdle for adoption of coordinated transmission techniques. Centralized solutions further require channel knowledge of the entire network to be present at a single node that will then be capable of obtaining the optimal transmit strategy and distribute it to the base stations that will be using the respective beam-formers. Centralized solutions are impractical for all but very small networks, and, as mentioned in \cite{Joshi2012}, the channels might have changed before the central solution has reached the base stations.

This has led many researchers to consider distributed optimization techniques that circumvent the need for network wide collection of CSI \cite{Dahrouj2010, Joshi2012, Pennanen2011}. Still, distributed optimization techniques are iterative in nature, and their convergence rate and need for interchanging intermediate information over back-haul channels must always be compared to the total amount of back-haul transmissions needed by a centralized solution when assessing their relative merits. If the convergence to an optimal solution is slow or requires an excessive amount of intermediate signaling, a centralized solution may still be preferable, at least within a localized cluster of neighboring cells. This said, one clear advantage of a distributed solution is that, once it has converged, it may be able to continuously adapt to small changes in the CSI with limited intermediate signaling. This is typically very hard to achieve with centralized solutions, as the CSI needs to be redistributed in the network on a time-scale dictated by the channels coherence time.

Motivated by the above, we will, in this paper, study the ability of the popular alternating direction method of multipliers (ADMM) algorithm to dynamically track an optimal down-link beam-forming solution in a multiple input multiple output (MISO) multi-cell network with time-varying channels. We will assume that the base stations are equipped with multiple antennas and that the mobile terminals (users) are equipped with single antennas. The base stations may use channel state information (CSI) to adapt the multi-antenna transmission in order to intelligently mitigate the effect of inter-cell interference.
Given the described scenario, several notions of an optimal transmit strategy have been adopted in the literature. The main differences lie in what one wishes to optimize. In opportunistic formulations, the focus is on finding a transmit strategy that maximizes a utility function of the transmission rate given a fixed power budget. However, such formulations may lead to variable rates which might not be desired for services where a specific quality of service (QoS) needs to be guaranteed. Additionally, utility rate maximization problems have been shown to be NP-hard in general \cite{Liu2011}, which makes characterization of distributed solutions significantly harder. On the contrary, the problem of minimizing the transmit power subject to QoS constraints in terms of the required signal to noise and interference ratios (SINRs) at each mobile terminal, initially believed to be non-convex, was shown to yield optimal solutions through the use of semi-definite relaxation (SDR) \cite{Bengtsson1999}, and was shown to be equivalent to a second-order cone program (SOCP) \cite{Bengtsson1999}, \cite{Wiesel2006}. We will therefore, in this work, consider the QoS constrained beam-forming problem formulation, partially for reasons of tractability.

Algorithms that can solve convex optimization problems in a distributed manner have attracted great interest in the recent years. A tutorial on general decomposition techniques can be found in \cite{Palomar2006}. Primal and dual decomposition are well known classes of techniques to decompose an optimization problem \cite{Boyd2007}. Both classes of decompositions rely on having a master problem and slave sub-problems. The sub-problems are then independently solved in the separate nodes while the master problem is solved iteratively using parameters obtained from the individual sub-problems. Primal and dual decomposition have also been previously applied to the QoS constrained power minimization problem considered herein. Examples include \cite{5425336} where dual decomposition was used, \cite{Pennanen2011} where primal decomposition was used, and also \cite{Shen2012, Joshi2012} where ADMM was used. There are also problem specific distributed techniques based on fixed-point iterations that exploit up-link down-link duality \cite{Dahrouj2010}. The up-link down-link approach has also been extended to the rate maximization problem \cite{Huang2011}. The work in \cite{Shen2012} considered a robust ADMM formulation where SDR was used to solve local worst-case robust beam-forming problems. We will however herein, for simplicity, not consider the robust ADMM formulation and instead apply ADMM as in \cite{Joshi2012}.

ADMM has previously been shown capable of tracking the optimal solution to a dynamically changing optimization problem \cite{Ling2013}.  Such results exist also for other decomposition techniques \cite{Simonetto}. However, most of the available results require strong convexity of the objective function, and deal with either unconstrained minimization problems or static feasible sets \cite{Simonetto, Ling2013}. A notable exception is the work in \cite{Simonetto2014}, where a time varying constraint set is used and the requirement for a strongly convex objective is removed in a gradient-type tracking algorithm. While the centralized QoS beam-forming problem considered herein has a strongly convex objective function, this strong convexity is unfortunately broken in the ADMM decomposition. This makes us unable to directly apply the results in \cite{Ling2013}, as these require strong convexity of the objective function in order to establish linear convergence \cite{Hong2012} as part of the proof therein. Furthermore, the QoS constraints are herein channel dependent and thus time-varying. We will therefore seek to establish a dynamic tracking result through an application of the weaker but more general ADMM convergence results presented in \cite{Boyd2010}.

Another issue to take into consideration is that the QoS constrained beam-forming problem is not generally guaranteed to be feasible over all possible channels for a given user to base station assignations. Clearly, no algorithm will be able to track the optimal solution if it does not exist. We will deal with this issue by limiting the tracking argument to sequences of channels within a compact set of channels for which the problem is guaranteed to be feasible. In practice, a communications system would continuously need to monitor the amount of power used, reject and admit users to the system, and reassign users to base stations. When the channel changes sufficiently much the mechanism in charge of performing the user to base station assignation will naturally introduce a change leading to an abrupt change of the problem structure and implying a loss of the tracking ability. We will however not explicitly consider such mechanisms further, and only consider tracking for channel sequences where the centralized problem remain feasible.

Finally, ADMM as proposed in \cite{Shen2012, Joshi2012}, and many other distributed algorithms as well, will only provide feasible solutions in the limit. This issue has not been overlooked by the research community. The standard solution is to interrupt the algorithm and perform a projection over the feasible set in order to to achieve feasibility of the solution \cite{Joshi2012}, \cite{5425336}. However, even when the original problem is assumed to be feasible, there is no guarantee that the projection step is successful. While it can be argued that the likelihood of the projection being feasible increases as the algorithm converges , we propose an alternative way of addressing this issue by allowing the QoS constraints to be violated by some small amount. As, under stable running condition, the deviation from the QoS constraints will be limited and controlled, we argue that the introduction of a simple QoS SINR margin would be enough to ensure the applicability of the algorithm in practice, and therefore we will not strictly enforce the QoS constraints.

With the above caveats in mind, we will prove that an ADMM algorithm that is allowed to perform one ADMM iteration per discrete unit time will be able to yield beam-formers that are arbitrarily close to the globally optimal beam-formers and provide SINRs which are arbitrarily close to or above  the target QoS constraints, provided that the channels vary sufficiently little between each time step within a compact set of channels for which the overall beam-forming problem is feasible.
  
We begin the paperin Section \ref{section:problemformulation} by introducing the down-link beam-forming problem and its reformulation so as to write it in a way that is amendable to ADMM and in order to introduce notation. We also discuss the requirements and assumptions needed for our main result to hold true in the same section. We then proceed to show in Section \ref{section:trackingwithadmm} the tracking ability of ADMM in a general setting under certain continuity assumption of intermediate solutions when viewed as functions of the channels and intermediate iterates. Once the tracking ability has been shown, we proceed to prove in Section~\ref{section:continuityanalysis} that the continuity assumptions hold for the considered beam-forming problem. Numerical results that are used to illustrate the results are presented in Section \ref{section:numericalresults}. Finally, concluding remarks are given in Section \ref{section:conclusions}.

\section{Problem formulation \label{section:problemformulation}}

Consider a cellular system with $B$ base stations and $K$ users where each user is served by one base station at a time. Assume that each base station is equipped with  $N_T$ transmitting antennas and that each mobile station is equipped with a single antenna. 

Each user $k$ has been assigned to a specific base station $b=b(k)$ that will serve it while keeping the interference caused to other users small. Given channels $\mathbf{h}_{mk} \in \mathbb{C}^{N_T \times 1}$ from base station $m$ to user $k$, the received signal at user $k$ can be expressed as \cite{5425336,Pennanen2011,Joshi2012}
\begin{subequations}
\begin{align}
   y_{k} \triangleq \underbrace{\mathbf{h}^H_{b(k)k}\mathbf{w}_{k}d_k}_{\text{scaled signal of interest}}   
 +\overbrace{\sum_{i \in \mathcal{U}(b(k)) \setminus k} \mathbf{h}^H_{b(k)k}\mathbf{w}_{i}}^{
 \text{intracell interference}} 
   \\ +\underbrace{\sum_{m \neq b(k)} \sum_{i \in \mathcal{U}(m)} \mathbf{h}^H_{mk}\mathbf{w}_{i}}
  _{\text{intercell interference}}+ n_{k} \, ,
\end{align}
\end{subequations}
where $\mathcal{U}(m)$ denotes the set of users served by base station $m$, where $\mathbf{w}_{k}$ denotes the transmit beam-former used by base station $b(k)$ to transmit to user $k$, where $d_k \in \mathbb{C}$ is the signal of interest with $\mathbb{E}\{|d_k|^2\} = 1$ and $\mathbb{E}\{d_kd_j\} = 0$ for $k \neq j$, and where $n_k$ represents circularly symmetric additive white Gaussian noise (AWGN) with variance $\sigma_k^2$. We will assume in the distributed solutions that base station $b$ has knowledge of $\mathbf{h}_{bk}$ for all $k=1,\ldots,K$ and $\mathbf{w}_{k}$ for $k \in \mathcal{U}
(b)$, but not of $\mathbf{h}_{mk}$ for $m \neq b$ and $\mathbf{w}_{k}$ for $k \notin \mathcal{U}
(b)$. The signal to interference and noise ratio (SINR) at user $k$, for a given set of channels and for a given transmit strategy, is given by
\begin{subequations}
\label{eq:defsinr}
 \begin{align}
  & \text{SINR}_k(\mathbf{W}, \mathbf{H}) \triangleq \\ & \frac{|\mathbf{h}_{b(k)k}^H\mathbf{w}_{k}|^2}
 {\displaystyle{\sum_{i \in \mathcal{U}(b(k)) \setminus k}}|\mathbf{h}^H_{b(k)k}\mathbf{w}_{i}|^2 + 
 \sum_{m \neq b} \sum_{i \in \mathcal{U}(m)} |\mathbf{h}_{mk}^H\mathbf{w}_{i}|^2 + \sigma_k^2} \, ,
 \end{align}
\end{subequations}
where $\mathbf{H}$ and $\mathbf{W}$ are matrices containing the complete set of channels and beam-forming vectors.
Since the rate $r_k$ to user $k$ is a monotonically increasing function of $\text{SINR}_k(\mathbf{W}, \mathbf{H})$, requiring a minimum SINR is equivalent to requiring a minimum rate per user. Hence, 
\begin{subequations}
\label{eq:allcentralized}
\begin{align}
\mathcal{W}^{\star}(\mathbf{H}) \triangleq \,
  \underset{\{\mathbf{w}_{k}\}}{\text{arg min}} \qquad   \sum_{b=1}^B\sum_{k \in \mathcal{U}(b)} ||\mathbf{w}_{k}||^2 \label{eq:centralizedobjective}\\
   \text{s.t.} \qquad  \text{SINR}_k(\mathbf{W},\mathbf{H})\geq \gamma_{k} > 0  \label{eq:centralizedSINR}\\
   k = 1,\hdots,K \nonumber
\end{align}
\end{subequations}
can be seen as a formulation of the minimum power strategy for a specific set of user QoS constraints, where $\mathcal{W}^{\star}(\mathbf{H})$ provides the optimal set of beam-formers $\mathbf{W}^{\star}$ given the channels $\mathbf{H}$.

The problem in \eqref{eq:allcentralized} can be equivalently formulated as a second order cone program (SOCP) \cite{Bengtsson1999,Wiesel2006}. The extension to a variable amount of antennas per base station is straightforward and avoided herein for simplicity. However, the extension to several antennas in reception is probably NP-hard for resource allocation with fixed QoS requirements \cite{Bjornson2013} and rate maximization subject to power constraints is NP-hard for two or more transmit antennas per base station \cite{Liu2011}. A technical issue with \eqref{eq:allcentralized} as stated is that the optimal solution is only unique  up to a phase ambiguity in the beam-forming vectors, i.e., if $\mathbf{w}_{k}^{\star}$ is optimal, so is $\mathbf{w}_{k}^{\star}e^{j\phi}$, where $\phi$ is an arbitrary phase. This phase ambiguity is removed when formulating the problems as a SOCP by setting the phase such that the products $\mathbf{h}^{H}_{b(k)k}\mathbf{w}_{k}$ are real valued and positive  \cite{Wiesel2006}, enforcing a unique phase for each beam-former. For this reason we will without loss of generality and without much further comments treat $\mathcal{W}^{\star}(\mathbf{H})$ as a singleton set, i.e., we assume that the optimal solution is unique.

As formulated in \eqref{eq:allcentralized}, the optimization problem would require centralization of the CSI. In order to solve \eqref{eq:allcentralized} with only local CSI,  \cite{Joshi2012, Shen2012}\footnote{The scenario treated in \cite{Shen2012} considers also that the obtained CSI is imperfect which is a generalization we do not consider} proposed ADMM based distributed formulations of problem \eqref{eq:allcentralized}.
Using ADMM in order to solve a problem in a distributed fashion involves creating copies of the variables that are shared by different nodes, or in this case base stations. Hence, the first step is to identify the shared information and define a new set of variables so as to limit the information exchange. We define, similar to  \cite{Shen2012} and \cite{5425336}, 
$t_{mk}^2 \triangleq \sum_{j \in \mathcal{U}(m)}| 
\mathbf{h}_{mk}^H\mathbf{w}_{j}|^2$, for $m \neq b(k)$, which is the power of the inter-cell interference caused by base station $m$ on user $k$ served by base station $b(k)$. The problem in \eqref{eq:allcentralized} can then be equivalently expressed as
\begin{subequations}
\label{eq:alldisready}
\begin{align}
 & \underset{\{t_{mk}\},\{\mathbf{w}_{k}\}}{\text{minimize}} 
 \,\,\,\,\, \qquad \sum_{b = 1}^{B} \sum_{k \in \mathcal{U}(b)}||\mathbf{w}_{k}||^2 \label{eq:costfun} \\
 & \text{s.t.} \,\,\,\,
\frac{|\mathbf{h}^H_{b(k)k}\mathbf{w}_{k}|^2}{\displaystyle{\sum_{i \in \mathcal{U}(b(k)) \setminus k}}
  |\mathbf{h}_{b(k)k}^H\mathbf{w}_{i}|^2 + \sum_{m \neq b} t^{(b(k))2}_{mk} + \sigma_{k}^2}  \geq \gamma_{k} \label{eq:sinrtargets} \\
& \qquad \qquad \qquad \qquad \big(t^{(m)}_{mk}\big)^2 -\sum_{i \in \mathcal{U}(m)}|\mathbf{h}_{mk}^H\mathbf{w}_{i}|^2   \geq  0 \\
 & \qquad \qquad \qquad \qquad \qquad \qquad \qquad  t_{mk}^{(m)} = t_{mk}^{(b(k))} \label{eq:consistency}\\
 & \qquad \qquad \qquad \qquad \qquad \qquad \qquad  \text{for } k = 1,\hdots, K \nonumber \\
 & \qquad \qquad \qquad \qquad \qquad \qquad \qquad  \text{for } m \neq b(k) \nonumber \, ,
\end{align}
\end{subequations}
where $t_{mk}^{(m)}$ is the inter-cell interference copy in base station $m$ and $t_{mk}^{(b(k))}$ is the inter-cell interference copy found in base station $b(k)$. Note that, except for the equality constraints in \eqref{eq:consistency}, i.e., that base stations $m$ and $b(k)$ agree on the amount of interference caused and suffered, the constraints in \eqref{eq:alldisready} only involve information of a single base station and the cost function in \eqref{eq:costfun} is separable across base stations. It should also be clear from the formulation in \eqref{eq:alldisready} that the interference caused by base station $m$ and suffered by a user in base station $b(k)$ will only be relevant, and hence exchanged, among base stations $m$ and $b(k)$. The coupling between base stations is also made explicit by \eqref{eq:consistency}.

Typically, dual decomposition or ADMM are used to decouple problems coupled through a constraint \cite{Palomar2006}. However, in this case we are in the presence of coupling variables. To be able to use ADMM we introduce a consistency variable $\tau_{mk}$ and force the equalities, according to $t_{mk}^{(m)} = \tau_{mk}$ and $t_{mk}^{(b(k))} = \tau_{mk}$.
More compactly, we can define $\mathbf{t}_{b} \in \mathbb{R}^{K + |\mathcal{U}(b)|(B-2)}$ containing base station $b$'s copies of the interference terms caused and suffered by its users, i.e., $t_{bj}^{(b)}$ for $j \not \in \mathcal{U}(b)$ and $t_{mk}$, $\forall m \neq b$ and $\forall k \in \mathcal{U}(b)$, respectively. For notational simplicity we additionally introduce $\mathbf{t}^T = (\mathbf{t}_1^T,\hdots,\mathbf{t}_B^T) \in \mathbf{R}^{2(B-1)K}$ and $\boldsymbol{\tau} \in \mathbb{R}^{K(B-1)}$, as aggregate vectors that contains all interferences and consistency variables. Then, the equality constraints in \eqref{eq:consistency} can be compactly expressed using the equality $\mathbf{E}\boldsymbol{\tau} = \mathbf{t}$, where $\mathbf{E} \in \mathbb{R}^{(B-1)K \times 2(B-1)K}$ is a matrix whose elements are $\{0,1\}$ that copies the elements of $\boldsymbol{\tau}$ in the positions corresponding to the copies in $\mathbf{t}$. If the equality $\mathbf{E}\boldsymbol{\tau} = \boldsymbol{t}$, or equivalently \eqref{eq:consistency}, were to be ignored, \eqref{eq:alldisready} would become decomposable over the base stations since the feasible set would be the Cartesian product of the independent feasible sets. This allows us to use ADMM \cite{Shen2012} (or alternatively dual decomposition \cite{5425336})
to provide a distributed algorithm. In order to simplify the formulation of the problems solved by each of the base stations we introduce

\begin{equation}
\label{eq:distributedSINR}
\begin{aligned}
& \text{SINR}_{k}(\mathbf{W}_b,\mathbf{H}_b,\mathbf{t}_b) \triangleq \\ &\frac{|\mathbf{h}_{b(k)k}^H\mathbf{w}_k|^2}{\sum_{i \in \mathcal{U}(b(k)) \setminus k} |\mathbf{h}_{b(k)k}^H\mathbf{w}_i|^2 + \sum_{m \neq b}t_{mk}^{(b)2} + \sigma_{k}^2 }
\end{aligned}
\end{equation}
and
\begin{equation}
\label{eq:distributedInterference}
\begin{aligned}
\text{INT}_{bj}(\mathbf{W}_b, \mathbf{H}_b, \mathbf{t}_b) \triangleq 
 \big(t_{bj}^{(b)}\big)^2 - \sum_{i \in \mathcal{U}(b)}|\mathbf{h}_{bj}^H\mathbf{w}_i|^2 \, ,
\end{aligned}
\end{equation}
where \eqref{eq:distributedSINR} denotes the SINR of user $k$ as a function of the beam-formers used by base station $b$, $\mathbf{W}_b$, the channels known to base station $b$, $\mathbf{H}_b$ and the estimated caused and suffered interference at base station $b$, $\mathbf{t}_b$. Analogously, \eqref{eq:distributedInterference} represents the constraint on the interference caused by base station $b$ to user $j$, where $j \not \in \mathcal{U}(b)$.

Using these quantities, the problem in \eqref{eq:alldisready} can now be compactly written as
\begin{subequations}
\label{eq:finaldistribute}
\begin{align}
 &\underset{\{\mathbf{w}_k\}, \, \mathbf{t}, \, \boldsymbol{\tau}}{\text{min}} \qquad \qquad \sum_{b=1}^B \sum_{k \in \mathcal{U}(b)}||\mathbf{w}_k||^2 \label{eq:finaldistribute-objective} \\
 &\text{s.t.} \qquad \qquad \text{SINR}_k(\mathbf{W}_b,\mathbf{H}_b,\mathbf{t}_b) \geq \gamma_k \label{eq:sinrtargets2} \\
  & \qquad \qquad \qquad \forall k\in \mathcal{U}(b),\, b=1,\hdots,B \nonumber\\
 & \qquad \qquad \quad \, \text{INT}_{bj}(\mathbf{W}_b,\mathbf{H}_b,\mathbf{t}_b) \geq 0,\, \\   & \qquad \qquad \qquad \forall j \not\in \mathcal{U}(b),\, b=1,\hdots,B  \nonumber \\
 & \qquad \qquad \quad \, \mathbf{E}\boldsymbol{\tau} = \mathbf{t}. \label{eq:finalconsistency}
\end{align}
\end{subequations}
Further, $\mathbf{E}$ can be partitioned accordingly to the $\mathbf{t}_b$ in $\mathbf{t}$ leading to $B$ linear equalities of the kind $\mathbf{E}_b\boldsymbol{\tau} = \mathbf{t}_b$, where $\mathbf{E}_b$ denotes the partition of $\mathbf{E}$ corresponding to the interference terms relevant to base station $b$.

Given a static set of channels, the problem in \eqref{eq:finaldistribute}, or equivalently \eqref{eq:alldisready} or \eqref{eq:allcentralized}, can thus be solved iteratively by Algorithm~\ref{alg:ADMMdistr}, which represents the ADMM algorithm applied to \eqref{eq:finaldistribute}. Convergence to an optimal solution follows from standard convergence proofs such as those presented in \cite{Boyd2010}.

\begin{figure}
\begin{algorithm}[H]
 \caption{ADMM for distributed beamforming}\label{alg:ADMMdistr}
 \begin{algorithmic}[1]
  \State Initialize $ \boldsymbol{\tau}^{[0]}$ and $\boldsymbol{\nu}^{[0]}$ such that $\mathbf{E}^T\boldsymbol{\nu}^{[0]} = 0$. Set $i=1$.
  \State Distributedly solve \label{step:xiterate} 
  \small
  \begin{subequations}
  \label{eq:ADMMdistr}
   \begin{align}
     \underset{\mathbf{t}_b,\{\mathbf{w}_{j}\}}{\text{min}} \,\,\,\, \sum_{j \in \mathcal{U}(b)} 
	||\mathbf{w}_{j}||^2 + (\boldsymbol{\nu}_b^{[i-1]})^T(\mathbf{t}_b - \mathbf{E}_b\boldsymbol{\tau}^{[i-1]}) \label{eq:objective}\\
 \,\,\,\,\,\,\,	+ \frac{\rho}{2}||\mathbf{t}_b - \mathbf{E}_b\boldsymbol{\tau}^{[i-1]}||^2 \notag \\
 \text{s.t.} \, \text{SINR}_k(\mathbf{W}_b,\mathbf{H}_b,\mathbf{t}_b) \geq \gamma_k ,\,
  \forall k \in \mathcal{U}(b) \label{eq:type1}\\
  \qquad \text{INT}_{bj}(\mathbf{W}_b,\mathbf{H}_b,\mathbf{t}_b) \geq 0,\, \forall j \not \in \mathcal{U}(b) \label{eq:type2}
   \end{align}
\end{subequations}
  \normalsize
   for each $b = 1,\ldots,B$ to obtain $\mathbf{w}_{k}^{[i]}$ for $k=1,\ldots,K$ and $\mathbf{t}_b^{[i]}$ at each base station $b$.
  \State Each base stations shares the relevant elements within $\mathbf{t}_b^{[i]}$ with other base stations. \label{step:distribtion}
  \State Compute $\boldsymbol{\tau}^{[i]} = \mathbf{E}^{\dagger}\mathbf{t}^{[i]}$ : $\mathbf{E}_b\boldsymbol{t}_b^{[i]}$ can
  be computed locally by averaging the terms in $\mathbf{t}_b^{[i]}$ with the received terms. \label{step:ziterate} 
  \State Compute $\boldsymbol{\nu}_b^{[i]} = \boldsymbol{\nu}_b^{[i-1]} + \rho(\mathbf{t}_b^{[i]} - \mathbf{E}_b\boldsymbol{\tau}^{[i]})$ \label{step:dualupdate} 
  \State Set $ i \leftarrow i + 1$, and return to 2.
 \end{algorithmic}
\end{algorithm}
\end{figure}


 However, given dynamically fading channels, the risk of rendering the CSI obsolete will lead to the necessity of interrupting the algorithm before it has reached an optimal point \cite{Joshi2012}. In case this happens, the approach proposed in \cite{Joshi2012} and \cite{5425336} is to interrupt the algorithm and to project over the feasible set, by setting the variables $\mathbf{t}^{[i]}_b = \mathbf{E}_b\boldsymbol{\tau}^{[i-1]},\,\,\forall b$. However, this projection is not necessarily feasible in which case more iterations will be required \cite{Shen2012}. The approach advocated herein is instead to allow for the SINR constraints in \eqref{eq:sinrtargets2} or \eqref{eq:sinrtargets} to be violated by a controlled amount.
 
Assuming block fading, and that the changes in the channels from block to block are bounded, a possible solution is to track the optimal set of beam-formers. A result showing ADMM's tracking capabilities, when the objective function changes from iteration to iteration, is provided in \cite{Ling2013}. However, the analysis found in \cite{Ling2013} considers unconstrained minimization of a strongly convex function with a Lipschitz continuous gradient. Unfortunately, even though the original problem in \eqref{eq:allcentralized} can be written with a strongly convex objective function with respect to the beam-formers, the price to pay for decomposability is the loss of strong convexity in \eqref{eq:finaldistribute-objective} with respect to the variables $\mathbf{t}$, $\boldsymbol{\tau}$. The results provided in \cite{Ling2013} heavily rely on ADMM's linear convergence \cite{Deng2015} which has the same requirements. Hence, in order to prove that ADMM is capable of tracking the optimal set of beam-formers, a different approach is required.

Our aim in this paper is to prove that given an initial set of variables $\boldsymbol{\tau}^{[0]}$ and $\boldsymbol{\nu}^{[0]}$ in Algorithm \ref{alg:ADMMdistr} satisfying\footnote{Which is also fulfilled by the optimal set of multipliers $\boldsymbol{\nu}^{\star}$} $\mathbf{E}^T\boldsymbol{\nu}^{[0]} = \boldsymbol{0}$,  ADMM is with only one ADMM iteration per channel change capable of providing a set of beam-formers that lie in a bounded neighborhood of the optimal set of beam-formers while the feasibility SINR constraints in \eqref{eq:sinrtargets2} are violated at most by a bounded amount. The proposal is hence to simply use Algorithm~\ref{alg:ADMMdistr} with the static channels $\mathbf{H}$ replaced by the channels at iteration $i$, denoted by $\mathbf{H}^{[i]}$. In order to prove the tracking capability, we require that the channels lies within a \emph{compact} set of channels, $\mathcal{H}$, that ensures that \eqref{eq:allcentralized} is strictly feasible. The compact set of channels fulfilling this condition will be referred in the sequel as the $\gamma_k-$feasible channels. 

An essential difference compared with other works \cite{Joshi2012, 5425336} is the requirement of strictly feasible channels. Considering strictly feasible channels guarantees that an arbitrarily small change in the channel will not render the problem infeasible. A second difference is that we allow the SINR constraints to be violated by a bounded amount so as to allow for small disagreements in the interference values at different base stations and hence avoiding the need to solve non-feasible problems. 

The contributions of this paper are particularized for the MISO optimal beam-forming problem. However the proof found in Section \ref{section:trackingwithadmm} shows that ADMM is capable of tracking an optimal solution as long as some continuity conditions are met by the problem at hand. In particular, we require that the optimal primal and dual points are continuous functions of the problem's data, which in this case is the channel. Additionally, we also require that the primal parameters, in this case $\mathbf{W}^{[i]}, \mathbf{t}^{[i]}$ and $\boldsymbol{\tau}^{[i]}$ obtained by solving \eqref{eq:ADMMdistr} in step~\ref{step:xiterate} of Algorithm~\ref{alg:ADMMdistr}, are continuous functions of the channel and the previous parameters used by ADMM i.e. $\boldsymbol{\nu}^{[i-1]}$ and $\boldsymbol{\tau}^{[i-1]}$.
In order to formalize the paper's main result we introduce Theorem \ref{maintheorem} which is proven in the subsequent sections.
\begin{theorem} \label{maintheorem} Let $\{ \mathbf{H}^{[i]} \}_{i=0}^\infty$ be a sequence of channels that lie within a compact set $\mathcal{H}$ of strictly $\gamma_k$-feasible channels. Given arbitrary positive constants $\epsilon_1 > 0$ and $\epsilon_2 > 0$, there is some $\delta > 0$ for which Algorithm~1 generates a sequence of beam-formers $\mathbf{W}^{[i]}$ for which the distance to the $\gamma_{k}$-optimal  beam-formers is guaranteed to fulfill
\begin{equation}
\underset{i \rightarrow \infty}{\text{lim sup}}\,||\mathbf{W}^{[i]} - \mathbf{W}^{[i]\star}||_\mathrm{F}^2 \leq \epsilon_1,
\end{equation}
where $\mathbf{W}^{[i]\star}$ denotes the optimal beam-formers at time $i$,
and the SINR of all users are guaranteed to fulfill
\begin{equation}
\underset{i \rightarrow \infty}{\text{lim inf}}\,\,\, \mathrm{SINR}_k(\mathbf{W}^{[i]},\mathbf{H}^{[i]},\mathbf{t}_b^{[i]})  \geq \gamma_k  - \epsilon_2
\end{equation}
whenever $\|\mathbf{H}^{[i]} - \mathbf{H}^{[i-1]} \| \leq \delta$ for all $i \geq 1$.
\end{theorem}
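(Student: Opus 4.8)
The plan is to control the evolution of the ADMM Lyapunov function associated with the splitting $\mathbf{t} - \mathbf{E}\boldsymbol{\tau} = \mathbf{0}$ underlying Algorithm~\ref{alg:ADMMdistr}, but measured against a \emph{moving} optimum. Writing $p^{[i]} \triangleq (\boldsymbol{\tau}^{[i]},\boldsymbol{\nu}^{[i]})$ and letting $p^{[i]\star} \triangleq (\boldsymbol{\tau}^{[i]\star},\boldsymbol{\nu}^{[i]\star})$ denote the primal--dual optimum of \eqref{eq:finaldistribute} for the channel $\mathbf{H}^{[i]}$, I would track
\[
 V_i(p) = \frac{1}{\rho}\|\boldsymbol{\nu} - \boldsymbol{\nu}^{[i]\star}\|^2 + \rho\,\|\mathbf{E}(\boldsymbol{\tau} - \boldsymbol{\tau}^{[i]\star})\|^2 .
\]
The argument rests on splitting one time step into a \emph{descent} part, in which a single ADMM iteration with the frozen channel $\mathbf{H}^{[i]}$ drives $p^{[i-1]}$ toward $p^{[i]\star}$, and a \emph{perturbation} part, in which the target itself moves from $p^{[i-1]\star}$ to $p^{[i]\star}$ because the channel changed. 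Since the initialization $\mathbf{E}^T\boldsymbol{\nu}^{[0]}=\mathbf{0}$ and the updates of Algorithm~\ref{alg:ADMMdistr} keep $\boldsymbol{\nu}^{[i]}$ in $\ker\mathbf{E}^T$ and $\boldsymbol{\tau}^{[i]}=\mathbf{E}^\dagger\mathbf{t}^{[i]}$ in $(\ker\mathbf{E})^{\perp}$, the function $V_i$ is a genuine squared norm of $p - p^{[i]\star}$ on the affine iterate manifold, so its sublevel sets intersected with that manifold are compact --- a fact I will lean on repeatedly.

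For the descent part I would invoke the strong-convexity-free ADMM analysis of \cite{Boyd2010}, which yields $V_i(p^{[i]}) \le V_i(p^{[i-1]}) - D_i(p^{[i-1]})$ with a nonnegative decrease $D_i$ (the sum of squared primal residual and $\boldsymbol{\tau}$-increment) that vanishes exactly at the fixed point $p^{[i]\star}$. This monotonicity alone is insufficient: it is precisely the absence of the linear contraction used in \cite{Ling2013} that forces a new idea here, so the crucial step is to upgrade it to a \emph{uniform} strict decrease. Using the joint continuity (proven in Section~\ref{section:continuityanalysis}) of the map sending $(\boldsymbol{\tau}^{[i-1]},\boldsymbol{\nu}^{[i-1]},\mathbf{H}^{[i]})$ to the iterate produced by \eqref{eq:ADMMdistr}, together with continuity of $p^{[i]\star}$ in the channel, the map $(\mathbf{H},p)\mapsto D_{\mathbf{H}}(p)$ is continuous, is strictly positive off the optimum, and on any annulus $\{\,a\le V_{\mathbf{H}}(p)\le b\,\}$ with $\mathbf{H}\in\mathcal{H}$, restricted to the iterate manifold, is a continuous positive function on a compact set; hence it is bounded below by some $d(a,b)>0$. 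This is the technical heart of the proof and the place where compactness of $\mathcal{H}$ is indispensable.

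For the perturbation part, continuity of $\mathbf{H}\mapsto p^{[i]\star}$ and compactness of $\mathcal{H}$ give uniform continuity, so $\|\mathbf{H}^{[i]}-\mathbf{H}^{[i-1]}\|\le\delta$ implies $\|p^{[i]\star}-p^{[i-1]\star}\|\le\omega(\delta)$ with $\omega(\delta)\to0$. By the triangle inequality for the norm inducing $V$ this yields $V_i(p^{[i-1]}) \le \big(\sqrt{V_{i-1}(p^{[i-1]})}+\tilde\omega(\delta)\big)^2$ for a rescaled modulus $\tilde\omega$. Combining with the descent bound produces the recursion $V_i(p^{[i]}) \le \big(\sqrt{V_{i-1}(p^{[i-1]})}+\tilde\omega(\delta)\big)^2 - d\big(V_{i-1}(p^{[i-1]})\big)$, where I also use the reverse triangle inequality to compare $V_i$ and $V_{i-1}$ at $p^{[i-1]}$. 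Comparing the $O(\sqrt{v}\,\tilde\omega(\delta))$ perturbation against the fixed decrease $d(v)>0$ shows that, for $\delta$ small, the right-hand side is strictly below $v=V_{i-1}(p^{[i-1]})$ whenever $v$ exceeds a threshold $v_0(\delta)$ that tends to $0$ with $\delta$; outside the threshold $V$ falls by a definite amount and inside it stays within $\big(\sqrt{v_0(\delta)}+\tilde\omega(\delta)\big)^2$. A standard induction on this decrease-or-stay-small dichotomy then gives $\limsup_{i\to\infty}V_i(p^{[i]})\le\eta(\delta)$ with $\eta(\delta)\to0$.

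It remains to convert smallness of $V_i(p^{[i]})$ into the two claimed bounds. For the beam-formers I would again use continuity of the solution map of \eqref{eq:ADMMdistr}: evaluated at the exact optimum $(\boldsymbol{\tau}^{[i]\star},\boldsymbol{\nu}^{[i]\star},\mathbf{H}^{[i]})$ it returns $\mathbf{W}^{[i]\star}$, so by uniform continuity and the fact that $(\boldsymbol{\tau}^{[i-1]},\boldsymbol{\nu}^{[i-1]})$ lies within $\sqrt{\eta(\delta)}+\omega(\delta)$ of that optimum, $\|\mathbf{W}^{[i]}-\mathbf{W}^{[i]\star}\|_{\mathrm{F}}^2\le\epsilon_1$ for $\delta$ small, giving the first limit. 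For the SINR, each subproblem constraint \eqref{eq:type1} enforces the target exactly in terms of the local interference copies, so the only route by which the achieved SINR can fall below $\gamma_k$ is the disagreement between the copies in $\mathbf{t}_b^{[i]}$ and the consistency values $\mathbf{E}_b\boldsymbol{\tau}^{[i]}$; this primal residual is exactly what $D_i$ and hence smallness of $V$ control, so continuity of $\mathrm{SINR}_k$ in its arguments yields $\mathrm{SINR}_k\ge\gamma_k-\epsilon_2$ in the limit. Taking $\delta$ to be the smallest value required by the three reductions completes the argument. The main obstacle, as indicated, is the uniform descent lemma: replacing the linear-convergence ingredient of \cite{Ling2013}, unavailable once strong convexity is lost in \eqref{eq:finaldistribute-objective}, by a compactness-and-continuity argument that still guarantees a channel-independent per-step decrease away from optimality.
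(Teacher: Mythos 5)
Your proposal follows essentially the same route as the paper: the same moving-target Lyapunov function from \cite{Boyd2010}, the same split into an ADMM descent step at the frozen channel plus a perturbation from the moving optimum, the same compactness-and-continuity argument yielding a uniform minimum decrease $d>0$ on the annulus where $V$ is confined between two thresholds (the paper's set $\mathcal{U} \cap (\mathcal{V} \times \mathcal{H})$), and the same conversions of the small-$V$ conclusion into the beam-former bound (via continuity of the subproblem solution map, cf.\ the paper's Lemma~1 and Appendix~A) and the SINR bound (via the primal residual controlling the interference disagreement, cf.\ Appendix~B). The only part you defer rather than reprove is the continuity of the optimal primal/dual solutions and of the subproblem map, which the paper establishes separately in Section~\ref{section:continuityanalysis}, exactly as you assume.
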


\section{Tracking with ADMM \label{section:trackingwithadmm}}

In this section we show, given a set of continuity assumptions, that ADMM is capable of tracking. In order to be able to show tracking without resorting to any proof requiring linear convergence, we use the convergence proof found in \cite{Boyd2010}. This proof relies on a per-iteration decrease on the algorithm's Lyapunov function, which we define as
\begin{equation}
\label{eq:lyapunov}
V(\boldsymbol{\nu},\boldsymbol{\tau},\mathbf{H}) \triangleq \frac{1}{\rho}||\boldsymbol{\nu} - \boldsymbol{\nu}^{\star}(\mathbf{H})||^2 + \rho||\mathbf{E}(\boldsymbol{\tau} - \boldsymbol{\tau}^{\star}(\mathbf{H}) )||^2,
\end{equation}
where $\boldsymbol{\tau}^{\star}(\mathbf{H})$ denotes the optimal consistency variables in \eqref{eq:finalconsistency} given channels $\mathbf{H}$, and $\boldsymbol{\nu}^{\star}(\mathbf{H})$ denotes the optimal dual variables associated with the consistency constraints \eqref{eq:finalconsistency} given the channels $\mathbf{H}$. Note that the dependence on $\mathbf{H}$ has its origin in the fact that the optimal interference values and optimal dual multipliers associated with \eqref{eq:finalconsistency} depend on the problems data, i.e. $\mathbf{H}$.
 In \cite{Boyd2010}, ADMM is shown to converge by using the fact that
\begin{equation}
\label{eq:staticlyapunov}
\begin{aligned}
V(\boldsymbol{\nu}^{[i]},\boldsymbol{\tau}^{[i]},\mathbf{H}) \leq V(\boldsymbol{\nu}^{[i-1]},\boldsymbol{\tau}^{[i-1]},\mathbf{H}) \\ 
- \rho||\mathbf{r}^{[i]}|| - \rho ||\mathbf{E}(\boldsymbol{\tau}^{[i]}-\boldsymbol{\tau}^{[i-1]})||^2,
\end{aligned}
\end{equation}
where $\mathbf{r}^{[i]} \triangleq \mathbf{t}^{[i]} - \mathbf{E}\boldsymbol{\tau}^{[i-1]}$ is the primal residual at the $i^{\text{th}}$ iterate. The residual $\mathbf{r}^{[i]}$ represents the disagreement among base stations, or deviation from the mean interference value, during the previous iterate. Equation \eqref{eq:staticlyapunov} essentially implies that there exists a non-zero decrease in $V(\boldsymbol{\nu},\boldsymbol{\tau},\mathbf{H})$ at each iteration unless all base-stations agree on the amount of interference. Note that in \cite{Boyd2010} the optimization problem is assumed to be static, i.e., in our context, the channels $\mathbf{H}$ in \eqref{eq:staticlyapunov} are assumed to be constant from iteration to iteration.  However, in the remainder of this section this assumption will be relaxed and the channel will be assumed to change from one iteration to the next and will hence be indexed using the iteration number. 

Given that the considered set of $\gamma_k-$feasible channels $\mathcal{H}$ is compact and that the channel variation is such that $||\mathbf{H}^{[i]} - \mathbf{H}^{[i+1]}||\leq \delta$ for all $i \geq 0$, we will assume the following:
\begin{itemize}
\item[(A1)] The optimal consistency variables $\boldsymbol{\tau}^{\star}$ for the consistency constraint \eqref{eq:finalconsistency} are a continuous function of the channel $\mathbf{H}$, i.e. $\boldsymbol{\tau}^{\star}(\mathbf{H})$ is a continuous function of $\mathbf{H}$ over $\mathcal{H}$.
\item[(A2)] The optimal dual multipliers $\boldsymbol{\nu}^{\star}$ in the consistency constraint \eqref{eq:finalconsistency} are a continuous function of the channel $\mathbf{H}$, i.e. $\boldsymbol{\nu}^{\star}(\mathbf{H})$ is a continuous function of $\mathbf{H}$ over $\mathcal{H}$.
\item[(A3)] The primal iterates $\mathbf{W}^{[i]}$, $\mathbf{t}^{[i]}$ and $\boldsymbol{\tau}^{[i]}$ at time $i$, are continuous functions of the iterates at time $i-1$, i.e., $\mathbf{W}^{[i]}(\boldsymbol{\tau}^{[i-1]},\boldsymbol{\nu}^{[i-1]},\mathbf{H}^{[i]})$, $\mathbf{t}^{[i]}(\boldsymbol{\tau}^{[i-1]},\boldsymbol{\nu}^{[i-1]},\mathbf{H}^{[i]})$ and $\boldsymbol{\tau}^{[i]}(\boldsymbol{\tau}^{[i-1]},\boldsymbol{\nu}^{[i-1]},\mathbf{H}^{[i]})$ corresponding to the resulting parameters generated by steps \ref{step:xiterate} and \ref{step:ziterate} in Algorithm \ref{alg:ADMMdistr}, are continuous functions of their respective input parameters.
\end{itemize} 
Note that assumption (A3) also implies continuity of $\boldsymbol{\nu}^{[i]}(\boldsymbol{\tau}^{[i-1]},\boldsymbol{\nu}^{[i-1]},\mathbf{H}^{[i-1]})$ by the continuity of the dual update in step \ref{step:dualupdate} in Algorithm \ref{alg:ADMMdistr}. Additionally the continuity of $\boldsymbol{\tau}^{[i]}(\boldsymbol{\tau}^{[i-1]},\boldsymbol{\nu}^{[i-1]},\mathbf{H}^{[i]})$ follows by the same principle from the continuity of $\mathbf{t}^{[i]}(\boldsymbol{\tau}^{[i-1]},\boldsymbol{\tau}^{[i-1]},\mathbf{H}^{[i-1]})$ for the beam-forming problem. However, this might not be the case for other optimization problems, and is thus assumed. Assumptions (A1)-(A3) will be proven to hold in the next section. However, for the time being they will be assumed to be given.

Conceptually, the proof that follows can be split in two parts. First, we show that given a bound on the Lyapunov function before the ADMM update, i.e. $V(\boldsymbol{\tau}^{[i-1]},\boldsymbol{\nu}^{[i-1]},\mathbf{H}^{[i]})$, we are capable of guaranteeing a bound on the distance to the optimal set of beam-formers. Second, we then show that there exists a channel variation $\delta$ such that we are guaranteed that the bound on the Lyapunov function holds in the limit when $i \rightarrow \infty$. Following this approach, we introduce two lemmas and their respective proofs to show that Theorem \ref{maintheorem} holds true given assumptions (A1)-(A3).
\begin{lemma} Given that assumption (A3) holds and given a constant $\epsilon_1 > 0$, there exists a constant $c > 0$ such that
\begin{equation} \label{eq:Lyapanov-bound}
\begin{aligned}
& \underset{i \rightarrow \infty}{\text{lim sup}} & V(\boldsymbol{\tau}^{[i-1]},\boldsymbol{\nu}^{[i-1]},\mathbf{H}^{[i]}) \leq c \\
\end{aligned}
\end{equation} 
implies that
\begin{equation}
\begin{aligned}
& \underset{i \rightarrow \infty}{\text{lim sup}} & 
\end{aligned} ||\mathbf{W}^{[i]} - \mathbf{W}^{[i]\star}||_{\mathrm{F}}^2 \leq \epsilon_1\,. 
\end{equation}
\end{lemma}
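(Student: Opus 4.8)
The plan is to exploit that the optimal point is a fixed point of the ADMM iteration, so that $\mathbf{W}^{[i]}$ and $\mathbf{W}^{[i]\star}$ are two evaluations of one and the same continuous map --- the $x$-update of step~\ref{step:xiterate} --- that differ only in the consistency and dual variables supplied to it. Write $\Phi(\boldsymbol{\tau},\boldsymbol{\nu},\mathbf{H})$ for the beam-former block returned by solving \eqref{eq:ADMMdistr}, so that $\mathbf{W}^{[i]} = \Phi(\boldsymbol{\tau}^{[i-1]},\boldsymbol{\nu}^{[i-1]},\mathbf{H}^{[i]})$ and, by (A3), $\Phi$ is continuous.

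First I would establish the fixed-point identity
\[
\mathbf{W}^{[i]\star} = \Phi\big(\boldsymbol{\tau}^{\star}(\mathbf{H}^{[i]}),\boldsymbol{\nu}^{\star}(\mathbf{H}^{[i]}),\mathbf{H}^{[i]}\big).
\]
This rests on the saddle-point characterisation of ADMM: holding the optimal multipliers $\boldsymbol{\nu}^{\star}$ and optimal consistency variables $\boldsymbol{\tau}^{\star}$ fixed, the optimal primal pair $(\mathbf{W}^{\star},\mathbf{t}^{\star})$ minimises the augmented Lagrangian \eqref{eq:objective} subject to the local constraints \eqref{eq:type1}--\eqref{eq:type2}, since $(\mathbf{W}^{\star},\mathbf{t}^{\star})$ minimises the ordinary Lagrangian while $\mathbf{t}^{\star}=\mathbf{E}\boldsymbol{\tau}^{\star}$ makes the nonnegative penalty term vanish. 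As $\Phi$ is single-valued (the phase-normalised SOCP $x$-update has a unique solution, which is what lets (A3) speak of a function), its output must equal $\mathbf{W}^{\star}$. Substituting into $\|\mathbf{W}^{[i]}-\mathbf{W}^{[i]\star}\|_{\mathrm F}^2$ then expresses it as $\|\Phi(\boldsymbol{\tau}^{[i-1]},\boldsymbol{\nu}^{[i-1]},\mathbf{H}^{[i]}) - \Phi(\boldsymbol{\tau}^{\star}(\mathbf{H}^{[i]}),\boldsymbol{\nu}^{\star}(\mathbf{H}^{[i]}),\mathbf{H}^{[i]})\|_{\mathrm F}^2$, i.e.\ a difference of $\Phi$ at two inputs sharing the same channel; consequently only continuity of $\Phi$ in its first two arguments is ever needed, and the dependence of $\mathbf{W}^{\star}$ on $\mathbf{H}$ never has to be controlled directly.

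Next I would upgrade the pointwise continuity of (A1)--(A3) to uniform continuity. By (A1)--(A2) and compactness of $\mathcal{H}$, the optimal trajectory $S=\{(\boldsymbol{\tau}^{\star}(\mathbf{H}),\boldsymbol{\nu}^{\star}(\mathbf{H}),\mathbf{H}):\mathbf{H}\in\mathcal{H}\}$ is compact. The hypothesis $V(\boldsymbol{\tau}^{[i-1]},\boldsymbol{\nu}^{[i-1]},\mathbf{H}^{[i]})\le c$ gives $\|\boldsymbol{\nu}^{[i-1]}-\boldsymbol{\nu}^{\star}\|^2\le\rho c$ and $\|\mathbf{E}(\boldsymbol{\tau}^{[i-1]}-\boldsymbol{\tau}^{\star})\|^2\le c/\rho$, and since the copy matrix $\mathbf{E}$ has full column rank this also bounds $\|\boldsymbol{\tau}^{[i-1]}-\boldsymbol{\tau}^{\star}\|^2$ by $c/(\rho\,\sigma_{\min}^2(\mathbf{E}))$. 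Hence the relevant iterates lie in a fixed compact neighbourhood of $S$, on which $\Phi$ is uniformly continuous; together with the fixed-point identity this yields, for every $\epsilon_1>0$, a radius $\delta'>0$ such that $\|\boldsymbol{\nu}^{[i-1]}-\boldsymbol{\nu}^{\star}(\mathbf{H}^{[i]})\|^2+\|\boldsymbol{\tau}^{[i-1]}-\boldsymbol{\tau}^{\star}(\mathbf{H}^{[i]})\|^2\le\delta'$ forces $\|\mathbf{W}^{[i]}-\mathbf{W}^{[i]\star}\|_{\mathrm F}^2\le\epsilon_1$.

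It remains to pick $c$. The squared argument distance is at most $\rho c+c/(\rho\,\sigma_{\min}^2(\mathbf{E}))$, which tends to $0$ as $c\to0$; choosing the per-iterate threshold $c_0$ so that this is $\le\delta'$ closes the implication for a single iterate, and taking the lemma's constant $c<c_0$ lets the $\limsup$ hypothesis force $V\le c_0$ for all large $i$, whence $\|\mathbf{W}^{[i]}-\mathbf{W}^{[i]\star}\|_{\mathrm F}^2\le\epsilon_1$ eventually and the claimed $\limsup$ bound follows. I expect the main obstacle to be the uniform-continuity step: (A3) supplies only pointwise continuity of $\Phi$, so the argument must first use compactness of $\mathcal{H}$ together with the $V$-bound to confine the iterates to a compact set before uniformity can be invoked, and one must separately ensure that the $x$-update is genuinely single-valued so that $\Phi$ is a well-defined map.
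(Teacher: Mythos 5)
Your proposal is correct, but it is not the proof the paper actually writes out. The paper explicitly mentions two alternative proofs of this lemma: a ``general case'' continuity argument, which it sketches in two sentences, and an explicit bound derived in Appendix~\ref{section:appendix1}, which is the one it carries out in detail. What you have done is flesh out the first route: the fixed-point identity $\mathbf{W}^{[i]\star}=\Phi(\boldsymbol{\tau}^{\star}(\mathbf{H}^{[i]}),\boldsymbol{\nu}^{\star}(\mathbf{H}^{[i]}),\mathbf{H}^{[i]})$, confinement of the iterates to a compact neighbourhood of the optimal trajectory via the bound \eqref{eq:Lyapanov-bound} and full column rank of $\mathbf{E}$ (indeed $\mathbf{E}^T\mathbf{E}=2\mathbf{I}$), and then uniform continuity of $\Phi$ on that compact set. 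The paper's Appendix~\ref{section:appendix1} shares your starting point --- it also notes that the subproblem \eqref{eq:appendixdistrib} evaluated at the optimal $(\boldsymbol{\tau}^{\star},\boldsymbol{\nu}^{\star})$ returns the optimal primal pair --- but instead of invoking continuity it exploits the structure of the objective: writing \eqref{eq:objective} as $\|\mathbf{w}\|^2+\tfrac{\rho}{2}\|\mathbf{t}-\mathbf{y}\|^2$, adding the two first-order optimality (variational) inequalities at $\mathbf{y}^{[i-1]}$ and at $\mathbf{y}^{[i]\star}$, and using strong convexity plus Cauchy--Schwarz and the fact $\mathbf{E}^T\boldsymbol{\nu}^{\star}=\mathbf{0}$ to obtain the explicit estimate $\|\mathbf{W}^{[i]}-\mathbf{W}^{[i]\star}\|_{\mathrm{F}}^2\leq(1+\tfrac{1}{\rho})c$. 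The trade-off is clear: your argument is more general (it would apply verbatim to any ADMM decomposition satisfying the continuity assumptions, with no use of the quadratic structure), whereas the paper's argument yields the quantitative dependence $c=\epsilon_1/(1+\tfrac{1}{\rho})$, which exposes the role of $\rho$ and is reused downstream in the SINR-violation bound \eqref{eq:SINRbound}. One caveat on your version: it genuinely needs (A1)--(A2) (compactness of the optimal trajectory over $\mathcal{H}$) to upgrade pointwise to uniform continuity, while the lemma's statement only grants (A3); this is harmless in context, since (A1)--(A2) are assumed in Lemma~\ref{lemma:boundedlyapunov} and proven in Section~\ref{section:continuityanalysis}, but the paper's appendix proof avoids them entirely --- in fact it needs none of (A1)--(A3).
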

Two alternative proofs of this can be provided. In the general case, the bound in \eqref{eq:Lyapanov-bound} will by \eqref{eq:lyapunov} imply that $\boldsymbol{\nu}^{[i-1]}$ and $\boldsymbol{\tau}^{[i-1]}$ are close to their respective optimal values. The continuity assumption for $\mathbf{W}^{[i]}(\boldsymbol{\tau}^{[i-1]},\boldsymbol{\nu}^{[i-1]},\mathbf{H}^{[i-1]})$ made in (A3) will imply that also $\mathbf{W}^{[i]}$ is close to the global optimal value. However, for the particular problem at hand, an explicit bound that yields insight into the dependency of $c$ on $\epsilon_1$ can also be provided; which is done in Appendix \ref{section:appendix1}.
\begin{lemma} \label{lemma:boundedlyapunov} Given that assumptions (A1)-(A3) hold, given a compact set $\mathcal{H}$ of $\gamma_k$-feasible channels, and given a constant $c > 0$, there exists a maximum channel variation $\delta > 0$, where $\mathbf{H}^{[i-1]} \in \mathcal{H}$ and $\| \mathbf{H}^{[i]} - \mathbf{H}^{[i-1]} \| \leq \delta$ for all $i \geq 1$, for which
\begin{equation}
\begin{aligned}
&\underset{i \rightarrow \infty}{\text{lim sup}} & V(\boldsymbol{\tau}^{[i-1]},\boldsymbol{\nu}^{[i-1]},\mathbf{H}^{[i]}) \leq c\,.
\end{aligned}
\end{equation}
\end{lemma}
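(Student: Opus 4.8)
The plan is to track the evolution of the Lyapunov function across one full update --- an ADMM iteration at the current channel followed by the channel increment --- and to show that, for sufficiently small $\delta$, the guaranteed per-iteration descent of \eqref{eq:staticlyapunov} dominates the perturbation caused by the moving optimizer. Writing $\tilde{V}_i \triangleq V(\boldsymbol{\nu}^{[i-1]},\boldsymbol{\tau}^{[i-1]},\mathbf{H}^{[i]})$, I would split each step into two contributions. First, since the $i$-th ADMM update in steps \ref{step:xiterate}--\ref{step:dualupdate} is carried out with the \emph{fixed} channel $\mathbf{H}^{[i]}$, inequality \eqref{eq:staticlyapunov} applies verbatim and gives $V(\boldsymbol{\nu}^{[i]},\boldsymbol{\tau}^{[i]},\mathbf{H}^{[i]}) \le \tilde{V}_i - \psi_i$, where $\psi_i \triangleq \rho\|\mathbf{r}^{[i]}\| + \rho\|\mathbf{E}(\boldsymbol{\tau}^{[i]}-\boldsymbol{\tau}^{[i-1]})\|^2 \ge 0$ is the realized decrease. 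Second, only the reference point of $V$ depends on the channel, so replacing $\mathbf{H}^{[i]}$ by $\mathbf{H}^{[i+1]}$ changes $V$ by a jump $J_i \triangleq \tilde{V}_{i+1} - V(\boldsymbol{\nu}^{[i]},\boldsymbol{\tau}^{[i]},\mathbf{H}^{[i]})$. Combining the two yields the recursion $\tilde{V}_{i+1} \le \tilde{V}_i - \psi_i + J_i$.

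Next I would bound the jump uniformly. Expanding the two squared-distance terms in \eqref{eq:lyapunov} with the identity $\|x-b'\|^2-\|x-b\|^2 = \langle 2x-b-b',\,b-b'\rangle$ reduces $J_i$ to inner products between the current deviations $\boldsymbol{\nu}^{[i]}-\boldsymbol{\nu}^{\star}(\mathbf{H}^{[i]})$, $\mathbf{E}(\boldsymbol{\tau}^{[i]}-\boldsymbol{\tau}^{\star}(\mathbf{H}^{[i]}))$ and the displacements of the optimizers $\boldsymbol{\nu}^{\star}(\mathbf{H}^{[i+1]})-\boldsymbol{\nu}^{\star}(\mathbf{H}^{[i]})$, $\mathbf{E}(\boldsymbol{\tau}^{\star}(\mathbf{H}^{[i+1]})-\boldsymbol{\tau}^{\star}(\mathbf{H}^{[i]}))$. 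Assumptions (A1)--(A2) together with compactness of $\mathcal{H}$ make $\boldsymbol{\nu}^{\star}(\cdot)$ and $\mathbf{E}\boldsymbol{\tau}^{\star}(\cdot)$ uniformly continuous, so these displacements are at most $\omega(\delta)$ with $\omega(\delta)\to0$ as $\delta\to0$. The deviations, in turn, are controlled by $\sqrt{V}$, so on any sublevel set $\{V\le R\}$ the jump obeys a uniform bound $J_i \le J_{\max}(\delta)$ with $J_{\max}(\delta)\to0$. To make such an $R$ available I would first note that step \ref{step:ziterate} keeps $\boldsymbol{\tau}^{[i]}\in\operatorname{range}(\mathbf{E}^{T})$ while the initialization $\mathbf{E}^{T}\boldsymbol{\nu}^{[0]}=\mathbf{0}$ is preserved by the dual update; hence $V$ is coercive in the iterates along the relevant subspaces, so boundedness of $V$ is equivalent to boundedness of $(\boldsymbol{\nu}^{[i]},\boldsymbol{\tau}^{[i]})$ and the sublevel sets intersected with $\mathcal{H}$ are compact.

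The crux is a \emph{uniform} strictly-positive lower bound on the descent away from optimality. By (A3) the one-step map and hence $\psi$ are continuous in $(\boldsymbol{\nu}^{[i-1]},\boldsymbol{\tau}^{[i-1]},\mathbf{H}^{[i]})$, and $\psi=0$ forces both a zero primal residual and a stationary $\boldsymbol{\tau}$, i.e.\ an ADMM fixed point; such a point is a KKT point of \eqref{eq:finaldistribute} and therefore attains the optimal $(\boldsymbol{\nu}^{\star},\boldsymbol{\tau}^{\star})$, at which $V=0$. Consequently $\psi>0$ whenever $V\ge c/2$, and on the compact annulus $\{(\boldsymbol{\nu},\boldsymbol{\tau},\mathbf{H}):\,c/2\le V\le R,\ \mathbf{H}\in\mathcal{H}\}$ the continuous function $\psi$ attains a minimum $\mu>0$. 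This is precisely the step that replaces the linear-convergence (strong-convexity) argument of \cite{Ling2013}: instead of a contraction factor, compactness and continuity furnish the required uniform progress.

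Finally I would close the argument by choosing $\delta$ so small that $J_{\max}(\delta)\le\min(\mu/2,\,c/2)$, with $R\triangleq\max(\tilde{V}_1,c)$. Then whenever $\tilde{V}_i\ge c/2$ the recursion gives $\tilde{V}_{i+1}\le\tilde{V}_i-\mu+J_{\max}(\delta)\le\tilde{V}_i-\mu/2$, a strict decrease that also keeps the iterate inside $\{V\le R\}$; and whenever $\tilde{V}_i<c/2$ the monotonicity of the ADMM step together with $J_{\max}(\delta)\le c/2$ gives $\tilde{V}_{i+1}\le c$. These two cases show that $\{V\le R\}$ is invariant and that $\tilde{V}_i$ cannot remain above $c/2$ indefinitely, so $\tilde{V}_i\le c$ for all sufficiently large $i$, which establishes $\limsup_{i\to\infty}\tilde{V}_i\le c$. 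I expect the main obstacle to be the uniform positive decrease of the previous paragraph, since this is exactly where the loss of strong convexity in \eqref{eq:finaldistribute-objective} would normally be fatal and must instead be circumvented through compactness; a secondary technical point is verifying the coercivity and invariance needed to obtain the compact sublevel sets on which that minimum is taken.
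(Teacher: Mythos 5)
Your proposal is correct and takes essentially the same route as the paper's own proof: a recursion splitting each step into the fixed-channel ADMM descent \eqref{eq:staticlyapunov} and a channel-induced jump, a uniform jump bound from (A1)--(A2) plus compactness of $\mathcal{H}$, a uniform positive descent constant obtained from (A3)-continuity and positivity away from the optimum on a compact set confined between two level values of $V$ (exactly the paper's $\mathcal{U} \cap (\mathcal{V}\times\mathcal{H})$, your annulus $\{c/2 \le V \le R\}$), and an invariance/induction argument choosing $\delta$ so the descent dominates the jump. The only differences are cosmetic: the paper works with explicit moduli $\Delta\nu^{\star}(\delta)$, $\Delta\tau^{\star}(\delta)$ and derives the final constant $c$ from an arbitrary $\mu_{\text{l}}$, whereas you target the given $c$ directly with thresholds $c/2$ and $R$.
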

\begin{proof}
ADMM guarantees that $V(\boldsymbol{\nu}^{[i]},\boldsymbol{\tau}^{[i]},\mathbf{H}^{[i]}) < V(\boldsymbol{\nu}^{[i-1]},\boldsymbol{\tau}^{[i-1]},\mathbf{H}^{[i]})$  for the Lyapunov function defined in \eqref{eq:lyapunov}, unless $\boldsymbol{\nu}^{[i-1]}$ and $\boldsymbol{\tau}^{[i-1]}$ are already optimal for $\mathbf{H}^{[i]}$ \cite{Boyd2010}. Additionally, given assumptions (A1)-(A2), the Lyapunov function $V(\boldsymbol{\nu},\boldsymbol{\tau},\mathbf{H})$ is continuous in $\mathbf{H}$ for fixed $\boldsymbol{\nu}$ and $\boldsymbol{\tau}$. The proof of Lemma~\ref{lemma:boundedlyapunov} presented below uses these facts to confine the Lyapunov function between two values. This can be achieved by guaranteeing that an increase in the Lyapunov function due to a change in the channel will always countered by a decrease in the Lyapanov function due to one iteration of the ADMM algorithm. A bound on the maximum variation of $V(\boldsymbol{\nu},\boldsymbol{\tau},\mathbf{H})$ over $\mathbf{H} \in \mathcal{H}$ for which $\|  \mathbf{H} - \mathbf{H}^{[i]}\| \leq \delta$, and for any pair $(\boldsymbol{\nu},\boldsymbol{\tau})$ that could be generated by the algorithm,
is obtained together with a minimum guaranteed decrease provided by ADMM. This is possible due to continuity assumptions (A1)-(A3) and the compactness of $\mathcal{H}$.

To this end, assume that the channel variation from iteration to iteration is upper bounded by some $\delta > 0$ to be specified later, i.e. $||\mathbf{H}^{[i+1]} - \mathbf{H}^{[i]}|| \leq \delta$ for all $i \geq 0$. For some given $\mu_{\text{l}} > 0$ and arbitrary $(\boldsymbol{\nu}^{[0]},\boldsymbol{\tau}^{[0]})$ with $\mathbf{E}^T\boldsymbol{\nu}^{[0]} = \mathbf{0}$ define
\begin{equation}
\mu_0 \triangleq \text{max}\left\{V(\boldsymbol{\nu}^{[0]},\boldsymbol{\tau}^{[0]},\mathbf{H}^{[0]}),\mu_{\text{l}}\right\}.
\end{equation}
Then, choose a finite $\Delta\mu > 0,$ let $\mu_{\text{u}} \triangleq \mu_0 + \Delta \mu $ and define the set $\mathcal{V} \subset \mathbb{R}^{3(B-1)K}$ as
\begin{equation}
\mathcal{V} \triangleq \{(\boldsymbol{\nu},\boldsymbol{\tau})\, |\, \exists \mathbf{H} \in \mathcal{H}, V(\boldsymbol{\nu},\boldsymbol{\tau},\mathbf{H}) \leq \mu_{\text{u}}, \mathbf{E}^T\boldsymbol{\nu} = \mathbf{0}\}.
\end{equation}
Due to the compactness of $\mathcal{H}$, the continuity of $\boldsymbol{\nu}^{\star}(\mathbf{H})$ and $\boldsymbol{\tau}^{\star}(\mathbf{H})$, and the strong convexity of $V(\boldsymbol{\nu},\boldsymbol{\tau},\mathbf{H})$ in $(\boldsymbol{\nu},\boldsymbol{\tau})$, the set $\mathcal{V}$ is also compact. Next, let
\begin{equation}
\mathcal{U} \triangleq \{(\boldsymbol{\nu},\boldsymbol{\tau},\mathbf{H}) \,|\, V(\boldsymbol{\nu},\boldsymbol{\tau},\mathbf{H}) \geq \mu_{\text{l}},\, \mathbf{E}^T\boldsymbol{\nu} = \mathbf{0}\}.
\end{equation}
Note that this set is closed but not bounded. However, the set $\mathcal{U} \cap (\mathcal{V} \times \mathcal{H})$ is compact as it is closed and bounded. The set  $\mathcal{U} \cap (\mathcal{V} \times \mathcal{H})$ is simply the set of parameters $(\boldsymbol{\nu},\boldsymbol{\tau},\mathbf{H})$ for which the Lyapunov function \eqref{eq:lyapunov} is upper and lower bounded according to
\begin{equation}
\label{eq:lyapunovconfinement}
\mu_{\text{l}} \leq V(\boldsymbol{\nu},\boldsymbol{\tau},\mathbf{H}) \leq  \mu_{\text{u}} = \mu_0 + \Delta \mu  \, , 
\end{equation}
i.e. we are confining the Lyapunov function's value between $\mu_{\text{l}}$ and $\mu_{\text{u}}$ by considering $(\boldsymbol{\nu},\boldsymbol{\tau},\mathbf{H}) \in \mathcal{U} \cap (\mathcal{V} \times \mathcal{H})$.

 From \cite{Boyd2010} we have that for a given triplet $(\boldsymbol{\nu}^{[i-1]},\boldsymbol{\tau}^{[i-1]},\mathbf{H}^{[i]})$ at the start of steps \ref{step:xiterate} in Algorithm \ref{alg:ADMMdistr}, the decrease in the Lyapanov function in the $i^\text{th}$ iteration over steps \ref{step:xiterate} to \ref{step:ziterate}, is lower bounded as [cf.\ \eqref{eq:staticlyapunov}]
 \begin{equation} \label{eq:decreaselowerbound}
 \begin{aligned}
 D(\boldsymbol{\tau}^{[i-1]},\boldsymbol{\nu}^{[i-1]}, \mathbf{H}^{[i]}) & \triangleq \\
 V(\boldsymbol{\nu}^{[i-1]},\boldsymbol{\tau}^{[i-1]},\mathbf{H}^{[i]}) - V(\boldsymbol{\nu}^{[i]},\boldsymbol{\tau}^{[i]},\mathbf{H}^{[i]}) & \geq \\
   \rho||\mathbf{r}^{[i]}||^2 + \rho||\mathbf{E}(\boldsymbol{\tau}^{[i]} - \boldsymbol{\tau}^{[i-1]})||^2 & \geq 0
 \end{aligned}
\end{equation} 
where equality holds only at the optimum when $\boldsymbol{\tau}^{[i-1]} = \boldsymbol{\tau}^\star(\mathbf{H}^{[i]})$, $\boldsymbol{\nu}^{[i-1]} = \boldsymbol{\nu}^\star(\mathbf{H}^{[i]})$  and $V(\boldsymbol{\nu}^{[i-1]},\boldsymbol{\tau}^{[i-1]},\mathbf{H}^{[i]}) = 0$. 

 
 
Given (A3) the lower bound on $D(\boldsymbol{\tau}^{[i-1]},\boldsymbol{\nu}^{[i-1]}, \mathbf{H}^{[i]})$ in \eqref{eq:decreaselowerbound} is continuous in $(\boldsymbol{\tau}^{[i-1]},\boldsymbol{\nu}^{[i-1]},\mathbf{H}^{[i]})$. Additionally, given the compactness of $\mathcal{U} \cap (\mathcal{V} \times \mathcal{H})$ and that $V(\boldsymbol{\nu}^{[i-1]},\boldsymbol{\tau}^{[i-1]},\mathbf{H}^{[i]}) \geq \mu_\text{l} > 0$ whenever $(\boldsymbol{\tau}^{[i-1]},\boldsymbol{\nu}^{[i-1]},\mathbf{H}^{[i]}) \in \mathcal{U} \cap (\mathcal{V} \times \mathcal{H})$,
it follows that there is a constant $d > 0$ for which
$
D(\boldsymbol{\nu},\boldsymbol{\tau},\mathbf{H}) \geq d
$
for all $(\boldsymbol{\tau}^{[i-1]},\boldsymbol{\nu}^{[i-1]},\mathbf{H}^{[i]}) \in \mathcal{U} \cap (\mathcal{V} \times \mathcal{H})$,
i.e., there is a minimum guaranteed decrease of the Lyapanov function.

By assumption it holds that $(\boldsymbol{\nu}^{[0]},\boldsymbol{\tau}^{[0]},\mathbf{H}^{[0]}) \in \mathcal{V} \times \mathcal{H}$ and that $V(\boldsymbol{\tau}^{[0]},\boldsymbol{\tau}^{[0]},\mathbf{H}^{[0]}) \leq \mu_0$. Assume now that for some arbitrary $i \geq 1$ it holds that $(\boldsymbol{\nu}^{[i-1]},\boldsymbol{\tau}^{[i-1]},\mathbf{H}^{[i-1]}) \in \mathcal{V} \times \mathcal{H}$ and that $V(\boldsymbol{\nu}^{[i-1]},\boldsymbol{\tau}^{[i-1]},\mathbf{H}^{[i-1]}) \leq \mu_0$. Given a change in the channel from $\mathbf{H}^{[i-1]}$ to $\mathbf{H}^{[i]}$, we can have that $V(\boldsymbol{\nu}^{[i-1]},\boldsymbol{\tau}^{[i-1]},\mathbf{H}^{[i]}) \leq \mu_{\text{l}}$ or that $V(\boldsymbol{\nu}^{[i-1]},\boldsymbol{\tau}^{[i-1]},\mathbf{H}^{[i]}) > \mu_{\text{l}}$. In the former case, it follows immediately by the monotonicity of the Lyapanov function for fixed channels that also $V(\boldsymbol{\nu}^{[i]},\boldsymbol{\tau}^{[i]},\mathbf{H}^{[i]}) \leq \mu_{\text{l}}$. In the latter case,
the Lyapunov function can be bounded, by using the fact that $\mathbf{E}^T\boldsymbol{\nu}^{\star}= \mathbf{0}$ and the triangular inequality applied to \eqref{eq:lyapunov}, according to 
\begin{equation}
\label{eq:boundincrlyap}
\begin{aligned}
V(\boldsymbol{\nu}^{[i-1]},\boldsymbol{\tau}^{[i-1]},\mathbf{H}^{[i]}) \leq   \mu_0 + 2\sqrt{\mu_0}(\frac{1}{\sqrt{\rho}}\Delta\nu^{\star}(\delta) + \sqrt{\rho} \Delta \tau^{\star}(\delta))   \\
   + \frac{1}{\rho}(\Delta\nu^{\star}(\delta))^2 + \rho (\Delta \tau^{\star}(\delta))^2,
\end{aligned}
\end{equation}
where
\begin{subequations}
\begin{align}
\Delta\tau^{\star}(\delta) \triangleq & \underset{\mathbf{H},\mathbf{H}' \in \mathcal{H}}{\text{max}} & ||\boldsymbol{\tau}^{\star}(\mathbf{H}) - \boldsymbol{\tau}^{\star}(\mathbf{H}')|| \\
\ & \text{s.t.} & ||\mathbf{H}-\mathbf{H}'|| \leq \delta
\end{align}
\end{subequations}
and where $\Delta\nu^{\star}(\delta)$ is analogously defined. Due to the compactness of $\mathcal{H}$, the continuity of $\boldsymbol{\tau}^{\star}(\mathbf{H})$ and $\boldsymbol{\nu}^{\star}(\mathbf{H})$ the quantities $\Delta \tau^{\star}(\delta)$ and $\Delta \nu^{\star}(\delta)$ are bounded and satisfy $\underset{\delta \rightarrow 0}{\text{lim}} \quad \Delta \tau ^{\star}(\delta) = 0$ and $\underset{\delta \rightarrow 0}{\text{lim}} \quad \Delta \nu^{\star}(\delta)=0.$


We need to select $\delta$ so as to guarantee that $(\boldsymbol{\nu}^{[i]},\boldsymbol{\tau}^{[i]}, \mathbf{H}^{[i-1]}) \in \mathcal{U} \cap (\mathcal{V} \times \mathcal{H})$. We do this by selecting $\delta$ such that 

\begin{equation}
\label{eq:deltacondition1}
\begin{aligned}
2\sqrt{\mu_0}(\frac{1}{\sqrt{\rho}}\Delta \nu^{\star}(\delta) + \sqrt{\rho}\Delta \tau^{\star}(\delta))  \\ +  \frac{1}{\rho}(\Delta\nu^{\star}(\delta))^2 + \rho (\Delta \tau^{\star}(\delta))^2 \leq \Delta \mu,
\end{aligned}
\end{equation}
implying that we have that  $V(\boldsymbol{\nu}^{[i]},\boldsymbol{\tau}^{[i]},\mathbf{H}^{[i]}) \leq V(\boldsymbol{\nu}^{[i-1]},\boldsymbol{\tau}^{[i-1]},\mathbf{H}^{[i]}) - d$. Thus, if $\delta$ is chosen such that
\begin{equation}
\label{eq:deltacondition2}
\begin{aligned}
2\sqrt{\mu_0}(\frac{1}{\sqrt{\rho}}\Delta \nu^{\star}(\delta) + \sqrt{\rho}\Delta \tau^{\star}(\delta))  \\ +  \frac{1}{\rho}(\Delta\nu^{\star}(\delta))^2 + \rho (\Delta \tau^{\star}(\delta))^2 \leq d,
\end{aligned}
\end{equation}
we have that $V(\boldsymbol{\nu}^{[i]},\boldsymbol{\tau}^{[i]},\mathbf{H}^{[i]}) \leq \mu_0$, which implies in turn that $(\boldsymbol{\nu}^{[i+1]},\boldsymbol{\tau}^{[i+1]},\mathbf{H}^{[i+1]}) \in \mathcal{V} \times\mathcal{H}$. Therefore, $\delta$ can be selected small enough so as to guarantee that the decrease can always compensate for the increase induced by the channel change, and at the same time, guarantee that there exists no channel change that pushes the Lyapunov function to a region in which the guaranteed decrease does not apply.  Note that $d$ is not dependent on $\delta$ but on $\mathcal{H}$, while $\mu_{\text{l}}$ and $\Delta \mu$ are arbitrarily selected. Hence it is always possible to find a parameter $\delta > 0$ fulfilling \eqref{eq:deltacondition1} and \eqref{eq:deltacondition2}.

Expressions \eqref{eq:deltacondition1} and \eqref{eq:deltacondition2} provide insights on how to select the parameter $\rho$ in case one can obtain the sensitivity of the dual problem or primal problem with respect to the problem's data; in other words, if the dual problem were to be very sensitive to the problem's data while the primal is less, one would select a large value for $\rho$.

It follows by induction that the bound $V(\boldsymbol{\nu}^{[i]},\boldsymbol{\tau}^{[i]},\mathbf{H}^{[i]}) \leq \mu_0$ will hold for all $i$. Additionally, if $\delta'$ is picked so that we have a margin, i.e such that $d \geq 2\sqrt{\mu_0}(\frac{1}{\sqrt{\rho}}\Delta \nu^{\star}(\delta') + \sqrt{\rho} \Delta \tau^{\star}(\delta')) + \frac{1}{\rho}(\Delta \nu^{\star }(\delta'))^2 + \rho (\Delta \tau^{\star}(\delta'))^2 + m,$ 
where $m > 0$ is a constant, we have that at each iteration, as long as we remain within $\mathcal{U} \cap (\mathcal{V} \times \mathcal{H})$, a net decrease, i.e. 
\begin{equation}
V(\boldsymbol{\nu}^{[i]},\boldsymbol{\tau}^{[i]},\mathbf{H}^{[i]}) \geq V(\boldsymbol{\nu}^{[i-1]},\boldsymbol{\tau}^{[i-1]},\mathbf{H}^{[i-1]}) - m.
\end{equation}
This implies that there exists a $i_0$ such that:
\begin{equation}
V(\boldsymbol{\nu}^{[i_0]},\boldsymbol{\tau}^{[i_0]},\mathbf{H}^{[i_0]}) \leq \mu_{\text{l}}.
\end{equation}
Note that we are not guaranteed a decrease of at least $d$ now since $(\boldsymbol{\nu}^{[i_0]},\boldsymbol{\tau}^{[i_0]},\mathbf{H}^{[i_0]}) \not\in \mathcal{U}$. However, we have that after one channel iteration, the Lyapunov function will be upper bounded by:
\begin{subequations}
\begin{align}
V(\boldsymbol{\nu}^{[i_0]},\boldsymbol{\tau}^{[i_0]},\mathbf{H}^{[i_0 + 1]})  \leq \mu_{\text{l}} + 2\sqrt{\mu_{\text{l}}}(\frac{1}{\sqrt{\rho}} \Delta \nu^{\star}(\delta') \\ + \sqrt{\rho}\Delta \tau^{\star}(\delta'))    + \frac{1}{\rho}(\Delta \nu^{\star} (\delta'))^2 + \rho(\Delta \tau^{\star}(\delta'))^2.
\end{align}
\end{subequations}
In case $V(\boldsymbol{\nu}^{[i_0]},\boldsymbol{\tau}^{[i_0]},\mathbf{H}^{[i_0 + 1]}) \geq \mu_{\text{l}}$ we have that $(\boldsymbol{\nu}^{[i_0]},\boldsymbol{\tau}^{[i_0]},\mathbf{H}^{[i_0+1]}) \in \mathcal{U} \cap (\mathcal{V} \times \mathcal{H})$ and therefore, there is a guaranteed decrease $d > 2\sqrt{\mu_0}(\frac{1}{\sqrt{\rho}}\Delta \nu^{\star}(\delta') + \sqrt{\rho} \Delta \tau^{\star}(\delta')) + \frac{1}{\rho}(\Delta \nu^{\star }(\delta'))^2 + \rho (\Delta \tau^{\star}(\delta'))^2$ compensating the increase caused in the Lyapunov function and yielding that $\forall\, i \geq i_0$
\begin{equation}
V(\boldsymbol{\nu}^{[i]},\boldsymbol{\tau}^{[i]},\mathbf{H}^{[i]}) \leq \mu_l,
\end{equation}
or equivalently
\begin{equation}
\begin{aligned}
\label{eq:limitlyapunov}
& \underset{i \rightarrow \infty}{\text{lim sup}} & V(\boldsymbol{\nu}^{[i]},\boldsymbol{\tau}^{[i]},\mathbf{H}^{[i]}) \leq \mu_{\text{l}};
\end{aligned}
\end{equation}
and thus
\begin{equation}
\label{eq:finalboundlyapunov}
\begin{aligned}
\underset{i \rightarrow \infty}{\text{lim sup}}  \qquad V(\boldsymbol{\nu}^{[i]},\boldsymbol{\tau}^{[i]},\mathbf{H}^{[i+1]}) \leq  c
\end{aligned}
\end{equation}
where  $c \triangleq \mu_{\text{l}} + \sqrt{2 \mu_l} (\frac{1}{\sqrt{\rho}}\Delta \nu^{\star}(\delta')  +\sqrt{\rho}\Delta \tau^{\star}(\delta'))  + \frac{1}{\rho}(\Delta \nu^{\star}(\delta'))^2 + \rho(\Delta \tau^{\star}(\delta'))^2$ ,
 concluding the proof of Lemma \ref{lemma:boundedlyapunov}. 
\end{proof}

Given ADMM's nature, primal feasibility can not be guaranteed until the algorithm has converged completely for a fixed channel. We therefore proceed to show the worse case possible deviation from the desired SINRs, $\gamma_k$.

In particular, in the worst case scenario the obtained SINR for user $k$ satisfies for $i \geq i_0$
\begin{equation}
\label{eq:SINRbound}
\text{SINR}_k(\mathbf{W}_b,\mathbf{H}_b,\mathbf{t}_b) =\gamma_k \left( 1 -\left( \frac{ 4 c}{\rho \sigma_k^2 + 4c} \right) \right) = \gamma_k \left(1 - \epsilon_{2k} \right).
\end{equation}
The proof of \eqref{eq:SINRbound} can be found in appendix \ref{section:appendix2}. This concludes the fact that ADMM can track the optimal set of beam-formers given the continuity assumptions (A1)-(A3) and that the channel does not vary too much from one time instance to the next.
Note that when considering the minimum achieved SINR due to the disagreement among base stations, the noise's variance plays an important role, i.e. the larger the noise variance the more negligible the disagreement among base stations is. As one might intuitively expect, the parameter $\rho$ is relevant in order to mitigate the disagreement. This can be easily seen due to the penalty parameter assigning weight to the term $||\mathbf{E}_b\mathbf{t}_b - \boldsymbol{\tau}||^2$ in \eqref{eq:objective}. However, from \eqref{eq:SINRbound} we can see that the effect $\rho$ has is equivalent to that of a ``noise enhancer" when it comes to mitigating the effect of the disagreement on the interference values. 
\section{Continuity analysis \label{section:continuityanalysis}}
In this section we show that assumptions (A1)-(A3) made in order to prove ADMM's tracking ability of the optimal set of beam-formers hold. We will first argue that showing continuity of the optimal consistency variables $\boldsymbol{\tau}^{\star}$ (A1) and the optimal dual variables associated with \eqref{eq:finalconsistency}, $\boldsymbol{\tau}^{\star}$ (A2) is equivalent to showing continuity of the primal and dual optimal variables of the centralized problem. When it comes to the optimal consistency variables $\boldsymbol{\tau}^{\star}$ this is fairly obvious, since the interference constraints \eqref{eq:type2}, as defined in Algorithm \ref{alg:ADMMdistr} will be fulfilled tightly at the optimal point. In order to show the continuity of the optimal dual multipliers $\boldsymbol{\nu}^{\star}$ associated with \eqref{eq:finalconsistency} it suffices to express the Lagrangian of the centralized problem \eqref{eq:allcentralized} and of the problem in \eqref{eq:finaldistribute} as in  \cite{Dahrouj2010}. By finding the optimality conditions with respect to the additional variables (i.e., the interference estimates $\mathbf{t}$ and the consistency variables $\boldsymbol{\tau}$) one can show that each of the elements in $\boldsymbol{\nu}^{\star}$ equals the product of the corresponding interference estimate $t_{mbk}^{(m)\star}$ and the multiplier associated with the SINR constraint \eqref{eq:type1} that contains it. Further, by taking gradient with respect to the beam-formers we obtain that the dual multipliers corresponding to the SINR constraint \eqref{eq:type1} equal the dual multipliers associated to the SINR constraints \eqref{eq:centralizedSINR} in the centralized problem. 

We will first show that the optimal interference estimates $t_{mbk}^{\star}$ are continuous functions of the channel. In order to do this we show that the optimal set of beam-formers $\mathbf{W}^{\star}$ are continuous functions of the channels in the centralized problem.
For this purpose we require Theorem \ref{theorem:continuity} (a special case of \cite[Theorem 2.2, 2.3]{Fiacco1990}). For completeness we include the definitions of \emph{closed} and \emph{open} point to set mappings defined as in \cite{Fiacco1990}.
\begin{definition}
\label{def:closed}
A point to set mapping $\mathcal{W}(\mathbf{H})$ is \emph{closed} at $\mathbf{\bar{H}}$ if for any sequence of channels $\mathbf{H}_n \in \mathcal{H}$, $\mathbf{H}_n \rightarrow \bar{\mathbf{H}}$,  and associated feasible beam-formers $\mathbf{W}_n \in \mathcal{W}(\mathbf{H}_n)$ such that $\mathbf{W}_n \rightarrow \bar{\mathbf{W}}$ it holds that $\bar{\mathbf{W}} \in \mathcal{W}(\bar{\mathbf{H}})$.
\end{definition}
\begin{definition}
A point to set mapping $\mathcal{W}(\mathbf{H})$ is \emph{open} at $\mathbf{\bar{H}}$ if for any sequence of channels $\mathbf{H}_n \in \mathcal{H}$,  such that $\mathbf{H}_n \rightarrow \mathbf{\bar{H}}$ and $\bar{\mathbf{W}} \in \mathcal{W}(\bar{\mathbf{H}})$, it holds that there exists $m$ and $\{\mathbf{W}_n\}$ such that $\mathbf{W}_n \in \mathcal{W}(\mathbf{H}_n)$ for all $n \geq m$, and $\mathbf{W}_n \rightarrow \mathbf{\bar{W}}$. 
\end{definition}
We are now ready to introduce the following theorem:
\begin{theorem}
\label{theorem:continuity}
Let $\mathcal{W}(\mathbf{H})$ and $\mathbf{W}^{\star}(\mathbf{H})$ be the set of feasible of beam-formers and the optimal beam-formers given channel $\mathbf{H}$ respectively. For the problem in \eqref{eq:allcentralized} $\mathbf{W}^{\star}(\mathbf{H})$ is continuous at $\mathbf{H}$ if:
\begin{enumerate}
\item The objective function in \eqref{eq:centralizedobjective} is continuous on $\mathcal{W}(\mathbf{H})$;
\item The point to set mapping $\mathcal{W}(\mathbf{H})$ is closed and open at $\mathbf{H}$;
\item The primal optimal point exists and is unique.
\end{enumerate}
\end{theorem}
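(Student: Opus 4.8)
The plan is to prove this through a Berge-maximum-theorem style argument, splitting the continuity of the solution map $\mathbf{W}^\star(\mathbf{H})$ into two stages: first establishing continuity of the optimal \emph{value} function $v(\mathbf{H}) \triangleq \min_{\mathbf{W} \in \mathcal{W}(\mathbf{H})} f(\mathbf{W})$, where $f(\mathbf{W}) = \sum_{b} \sum_{k \in \mathcal{U}(b)} \|\mathbf{w}_k\|^2$ is the objective in \eqref{eq:centralizedobjective}, and then upgrading value continuity to solution continuity by invoking uniqueness (condition 3). Throughout, I would fix an arbitrary sequence $\mathbf{H}_n \to \bar{\mathbf{H}}$ in $\mathcal{H}$ and aim to show $\mathbf{W}^\star(\mathbf{H}_n) \to \mathbf{W}^\star(\bar{\mathbf{H}})$.

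First I would prove that $v$ is upper semicontinuous using the \emph{openness} of $\mathcal{W}$. Since $\bar{\mathbf{W}} \triangleq \mathbf{W}^\star(\bar{\mathbf{H}})$ is feasible at $\bar{\mathbf{H}}$, openness supplies feasible beam-formers $\mathbf{W}_n \in \mathcal{W}(\mathbf{H}_n)$ with $\mathbf{W}_n \to \bar{\mathbf{W}}$; continuity of $f$ (condition 1) then gives $v(\mathbf{H}_n) \le f(\mathbf{W}_n) \to f(\bar{\mathbf{W}}) = v(\bar{\mathbf{H}})$, so $\limsup_n v(\mathbf{H}_n) \le v(\bar{\mathbf{H}})$. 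For lower semicontinuity I would use \emph{closedness}: letting $\mathbf{W}_n^\star$ be any minimizer of $f$ over $\mathcal{W}(\mathbf{H}_n)$, I extract a convergent subsequence $\mathbf{W}_{n_k}^\star \to \mathbf{W}'$ (see below), note $\mathbf{W}' \in \mathcal{W}(\bar{\mathbf{H}})$ by closedness (Definition \ref{def:closed}), and conclude $v(\bar{\mathbf{H}}) \le f(\mathbf{W}') = \lim_k f(\mathbf{W}_{n_k}^\star) = \lim_k v(\mathbf{H}_{n_k})$, hence $\liminf_n v(\mathbf{H}_n) \ge v(\bar{\mathbf{H}})$. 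Together these yield $v(\mathbf{H}_n) \to v(\bar{\mathbf{H}})$.

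With value continuity in hand, I would show that any sequence of minimizers $\mathbf{W}_n^\star$ at $\mathbf{H}_n$ converges to $\bar{\mathbf{W}}$, arguing by contradiction. If not, some subsequence stays bounded away from $\bar{\mathbf{W}}$; extracting a further convergent sub-subsequence $\mathbf{W}_{n_k}^\star \to \mathbf{W}''$, closedness makes $\mathbf{W}''$ feasible at $\bar{\mathbf{H}}$, while continuity of $f$ and of $v$ give $f(\mathbf{W}'') = \lim_k v(\mathbf{H}_{n_k}) = v(\bar{\mathbf{H}})$, so $\mathbf{W}''$ is optimal at $\bar{\mathbf{H}}$. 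Uniqueness (condition 3) then forces $\mathbf{W}'' = \bar{\mathbf{W}}$, contradicting that the subsequence was bounded away from $\bar{\mathbf{W}}$; thus $\mathbf{W}^\star(\mathbf{H}_n) \to \mathbf{W}^\star(\bar{\mathbf{H}})$.

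The main obstacle is the extraction of convergent subsequences of optimizers, since $\mathcal{W}(\mathbf{H})$ is not assumed to be compact-valued -- this is exactly the step where Berge's theorem ordinarily invokes compactness of the constraint correspondence. For the beam-forming problem this gap is closed by the \emph{coercivity} of the objective: because $f(\mathbf{W}) = \|\mathbf{W}\|_\mathrm{F}^2$ has bounded sublevel sets and the values $v(\mathbf{H}_n)$ are bounded (they converge), every minimizer $\mathbf{W}_n^\star$ eventually lies in a fixed bounded set, so the Bolzano--Weierstrass extractions used above are legitimate. I would also flag that the openness hypothesis is the delicate one to satisfy in a general parametric program, but here it is granted as condition 2, and it is precisely what powers the upper-semicontinuity half of the value-function argument.
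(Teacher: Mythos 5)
Your proof is correct, but it takes a genuinely different route from the paper for the simple reason that the paper does not prove this theorem at all: it invokes it as a special case of the parametric-programming stability results in \cite[Theorems 2.2 and 2.3]{Fiacco1990}, which are Hogan/Berge-type point-to-set mapping theorems. Your argument reconstructs that machinery from scratch, and the division of labor is exactly right: openness of $\mathcal{W}(\cdot)$ powers upper semicontinuity of the value function $v$, closedness plus a subsequence extraction powers lower semicontinuity, and uniqueness upgrades value convergence to convergence of the minimizers themselves. The substantive thing your write-up adds over the bare citation is the explicit handling of compactness: the feasible sets $\mathcal{W}(\mathbf{H})$ of this problem are unbounded (scaling every beam-former by a common factor $\alpha > 1$ preserves all SINR constraints), so a naive appeal to Berge's theorem, which assumes a compact-valued constraint correspondence, would fail; your use of coercivity of $f(\mathbf{W}) = \|\mathbf{W}\|_{\mathrm{F}}^2$ to confine all relevant minimizers to a fixed sublevel set is precisely the right patch, and is implicitly what makes the paper's citation legitimate for this problem. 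One small repair is needed: justifying the extraction by saying the values $v(\mathbf{H}_n)$ ``are bounded (they converge)'' is circular at that stage, since their convergence is what is being proved; instead, note that the already-established upper-semicontinuity half yields $\limsup_n v(\mathbf{H}_n) \le v(\bar{\mathbf{H}})$, hence $v(\mathbf{H}_n) \le v(\bar{\mathbf{H}}) + 1$ for all large $n$, which together with coercivity bounds the minimizers and legitimizes Bolzano--Weierstrass. With the steps ordered that way (openness/usc first, then boundedness of minimizers, then closedness/lsc, then uniqueness), your proof is complete and self-contained.
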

The objective function in \eqref{eq:allcentralized} is strictly convex and continuous on $\mathbb{C}^{N_T \times K }$. Additionally when writing the constraints as SOCs the phase ambiguity, otherwise present, vanishes as mentioned in Section \ref{section:problemformulation}; this implies that not only the first, but the also the third of the theorem hold. Hence, in order to show continuity of the function $\mathbf{W}^{\star}(\mathbf{H})$ with respect to the channels, we require showing that the point to set mapping $\mathcal{W}(\mathbf{H})$ is closed and open $\forall \mathbf{H} \in \mathcal{H}$.  

Intuitively, showing that the point to set mapping representing the feasible sets is closed implies that an arbitrarily small change in the channel is not capable of violating the constraints by an arbitrarily large amount.
\begin{lemma}
\label{lemma:primalcont}
Given the set of $\gamma_k-$feasible channels $\mathcal{H}$, for a specific user-base station allocation and SINR requirements, the mapping providing the optimal set of beam-formers is closed and open.
\end{lemma}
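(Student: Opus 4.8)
The plan is to read ``the mapping providing the optimal set of beam-formers'' as the single-valued point-to-set mapping $\mathbf{H} \mapsto \mathcal{W}^{\star}(\mathbf{H}) = \{\mathbf{W}^{\star}(\mathbf{H})\}$, which is everywhere defined and single-valued on $\mathcal{H}$ because the optimal point exists and, once the phase ambiguity is fixed as in Section~\ref{section:problemformulation}, is unique. For such a mapping, being closed and open in the sense of Definition~\ref{def:closed} is equivalent to $\mathbf{W}^{\star}(\cdot)$ being a continuous function: openness forces the unique optimum to vary continuously, and closedness then follows automatically. Consequently it suffices to invoke Theorem~\ref{theorem:continuity}. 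Its conditions~1 and~3 have already been verified (the objective in \eqref{eq:centralizedobjective} is strictly convex and continuous, and fixing the phases makes the optimum unique), so the whole task reduces to verifying condition~2, namely that the \emph{feasible}-set mapping $\mathcal{W}(\mathbf{H})$ is closed and open at every $\mathbf{H} \in \mathcal{H}$.

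Closedness is the routine direction. Writing the QoS constraints in their SOC form together with the phase-alignment conditions $\mathrm{Im}(\mathbf{h}_{b(k)k}^H\mathbf{w}_k)=0$ and $\mathrm{Re}(\mathbf{h}_{b(k)k}^H\mathbf{w}_k)\ge 0$, every constraint defining $\mathcal{W}(\mathbf{H})$ is a closed condition given by a function jointly continuous in $(\mathbf{W},\mathbf{H})$. Hence, for any $\mathbf{H}_n \to \bar{\mathbf{H}}$ and feasible $\mathbf{W}_n \in \mathcal{W}(\mathbf{H}_n)$ with $\mathbf{W}_n \to \bar{\mathbf{W}}$, passing to the limit in each constraint preserves it, so $\bar{\mathbf{W}} \in \mathcal{W}(\bar{\mathbf{H}})$.

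The main obstacle is openness (lower semicontinuity of $\mathcal{W}$). Given $\mathbf{H}_n \to \bar{\mathbf{H}}$ and $\bar{\mathbf{W}} \in \mathcal{W}(\bar{\mathbf{H}})$, I must construct feasible $\mathbf{W}_n \in \mathcal{W}(\mathbf{H}_n)$ with $\mathbf{W}_n \to \bar{\mathbf{W}}$. Here I would exploit that $\mathcal{H}$ consists of strictly $\gamma_k$-feasible channels, so there is a Slater point $\mathbf{W}^{0}$ strictly satisfying \eqref{eq:centralizedSINR} at $\bar{\mathbf{H}}$. Because, with the phases fixed, the QoS and interference constraints are second-order-cone constraints and hence convex in $\mathbf{W}$, the convex combination $\mathbf{W}_\lambda \triangleq (1-\lambda)\bar{\mathbf{W}} + \lambda \mathbf{W}^{0}$ is \emph{strictly} feasible at $\bar{\mathbf{H}}$ for every $\lambda \in (0,1]$. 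Strict inequalities persist under sufficiently small channel perturbations by joint continuity, so each fixed $\mathbf{W}_\lambda$ is feasible at $\mathbf{H}_n$ for all large $n$; a diagonal argument then selects $\lambda_n \downarrow 0$ with $\mathbf{W}_{\lambda_n}$ feasible at $\mathbf{H}_n$ and $\mathbf{W}_{\lambda_n} \to \bar{\mathbf{W}}$. The only remaining point is the channel-dependent phase-alignment equality: I would rotate each beam-former by a unit-modulus factor $e^{j\phi_{k,n}}$ so that $\mathbf{h}_{b(k)k,n}^H\mathbf{w}_{k,n}$ becomes real and nonnegative, which is well defined since the QoS constraint and $\sigma_k^2 > 0$ keep the signal power bounded away from zero. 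Such a rotation leaves every $|\mathbf{h}^H\mathbf{w}|$ unchanged, so it affects neither the SINR nor the interference constraints, and $\phi_{k,n} \to 0$ as $\mathbf{H}_n \to \bar{\mathbf{H}}$, preserving convergence to $\bar{\mathbf{W}}$. This step is exactly where strict feasibility of $\mathcal{H}$ is indispensable: were $\bar{\mathbf{W}}$ allowed to lie on the boundary of a feasible set with empty interior, an arbitrarily small channel change could render it infeasible and openness would fail.

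Combining the two directions shows $\mathcal{W}(\mathbf{H})$ is closed and open on $\mathcal{H}$, so Theorem~\ref{theorem:continuity} yields continuity of $\mathbf{W}^{\star}(\cdot)$; equivalently, the single-valued optimal mapping $\mathbf{H} \mapsto \mathcal{W}^{\star}(\mathbf{H})$ is closed and open, which is the claim of the lemma. As a by-product, continuity of $\mathbf{W}^{\star}$ carries over to the optimal interference estimates $t_{mbk}^{\star}$ and, through the Lagrangian identities noted above, to $\boldsymbol{\nu}^{\star}$, thereby supplying assumptions (A1) and (A2).
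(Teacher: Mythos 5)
Your proof is correct, and it follows the paper's overall skeleton (reduce via Theorem~\ref{theorem:continuity} to closedness and openness of the feasible-set mapping; closedness by joint continuity of the SINR functions and passing to the limit), but your argument for openness—the substantive direction—is genuinely different from the paper's. The paper constructs the approximating sequence $\mathbf{W}_n$ by \emph{power scaling}: it invokes the power-control results of Boche--Schubert, showing that the scaling $\mathbf{p} = (\mathbf{I} - \mathbf{D}\boldsymbol{\Psi})^{-1}\mathbf{D}\boldsymbol{\sigma}$ in \eqref{eq:optscaling} exists whenever the spectral radius of $\mathbf{D}\boldsymbol{\Psi}$ is strictly below one, that this persists in a neighborhood of $\bar{\mathbf{H}}$, and that $\mathbf{p}$ is an explicitly continuous function of the channel, so $\mathbf{w}_k = \sqrt{p_k}\bar{\mathbf{w}}_k$ gives the desired sequence. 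You instead take a Slater point $\mathbf{W}^0$ (available by strict $\gamma_k$-feasibility of $\mathcal{H}$), use convexity of the phase-aligned SOC constraints to get strict feasibility of $\mathbf{W}_\lambda = (1-\lambda)\bar{\mathbf{W}} + \lambda\mathbf{W}^0$, persistence of strict inequalities under small channel perturbations, and a diagonalization in $(\lambda, n)$. Both arguments are sound. What each buys: the paper's scaling argument works directly on the original phase-invariant formulation (positive real scaling never disturbs phases, so no re-rotation step is needed), yields an explicit continuous selection rather than an existence-by-diagonalization argument, and the same machinery is reused almost verbatim in the proof of Lemma~\ref{lemma:distrcont}, where the per-base-station problems have the extra interference variables; your argument is more elementary and more general—it applies to any parametric convex program with jointly continuous constraints and a Slater point, without any Perron--Frobenius input—at the price of working in the phase-aligned convex reformulation and then undoing the channel-dependent phase alignment, a step you handle correctly since $|\mathbf{h}_{b(k)k}^H\mathbf{w}_k|^2 \geq \gamma_k \sigma_k^2 > 0$ keeps the rotation well defined and vanishing in the limit. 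One caveat on your closing remark: continuity of $\boldsymbol{\nu}^{\star}$ (assumption (A2)) does not follow from this lemma and the Lagrangian identities alone, because each element of $\boldsymbol{\nu}^{\star}$ is the product of an interference estimate and a SINR multiplier $\lambda_k^{\star}$; continuity of the $\lambda_k^{\star}$ is a separate result, proved in the paper as Lemma~\ref{lemma:dualcont} via up-link--down-link duality.
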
 
\begin{proof}
Note that the SINR constraints of the centralized problem \eqref{eq:centralizedSINR} are continuous functions of the channels and the beam-formers. In particular, let $\text{SINR}_k(\mathbf{W}_n,\mathbf{H}_n)$ in \eqref{eq:defsinr} be the value of the SINR constraint of user $k$ given the channel $\mathbf{H}_n$ and the set of beam-formers $\mathbf{W}_n$ and let $\mathbf{\bar{H}}$ and $\mathbf{\bar{W}}$ be given as in Definition \ref{def:closed}. Then, due to continuity of the SINR functions with $\mathbf{H}$ and $\mathbf{W}$ it follows that
\begin{equation}
\underset{n \rightarrow \infty}{\text{lim}}\, \text{SINR}_k(\mathbf{W}_n,\mathbf{H}_n) = \text{SINR}_k(\bar{\mathbf{W}},\mathbf{\bar{H}}),\, \forall k,
\end{equation}
and given that all pairs $\mathbf{H}_n, \mathbf{W}_n$ satisfy $\text{SINR}_k(\mathbf{W}_n,\mathbf{H}_n) \geq \gamma_k$ this will also be satisfied in the limit. This establishes that the mapping is closed.

We now proceed to show that $\mathcal{W}(\mathbf{H})$ is also open. In order to prove this, we assume that we have a set of feasible beam-formers $\mathbf{\bar{W}}$ for a given channel $\mathbf{\bar{H}}$. Then, we use known results to establish a neighborhood of $\mathbf{\bar{H}}$ for which we can find a scaling vector $\mathbf{p} = [p_1,\hdots,p_K]^T$ that is a continuous function of the channel such that a feasible set of beam-formers $\mathbf{W} \in \mathcal{W}(\mathbf{H})$ can be found using $\mathbf{\bar{W}}$ as $\mathbf{w}_k = \sqrt{p_k}\mathbf{\bar{w}}_k.$ For this purpose, assume that for a given channel $\mathbf{\bar{H}}$ we have available (without loss of generality) strictly feasible beam-formers $\mathbf{\bar{W}} \in \mathcal{W}(\mathbf{\bar{H}})$. Define now as in \cite{Boche2002}, the power of the interference caused by the transmission to user $j$ over user $k$  as $G_{kj}(\mathbf{\bar{H}}, \mathbf{\bar{W}}) =\mathbf{\bar{w}}_{j}^{H}\mathbf{\bar{h}}_{b(j)k}\mathbf{\bar{h}}_{b(j)k}^H\mathbf{\bar{w}}_{j}$. These terms will be collected in the matrix $\boldsymbol{\Psi}(\mathbf{\bar{H}},\mathbf{\bar{W}}),$ where
\begin{equation}
[\boldsymbol{\Psi}(\mathbf{\bar{H}},\mathbf{\bar{W}})]_{kj} = \begin{cases}
G_{kj}(\mathbf{\bar{H}},\mathbf{\bar{W}}), & j \neq k\\
0 & j = k.
\end{cases}
\end{equation}
Let $\mathbf{D}(\mathbf{\bar{H}},\mathbf{\bar{W}}) \triangleq \text{diag}\{\frac{\gamma_1}{G_{11}(\mathbf{\bar{H}},\mathbf{\bar{W}})},\hdots,\frac{\gamma_K}{G_{KK}(\mathbf{\bar{H}},\mathbf{\bar{W}})}\}$ which represents the ratio between desired SINR and received power for each of the users, and let $\boldsymbol{\sigma} = [\sigma_1^2,\hdots,\sigma_K^2]^T$. The vector $\mathbf{p}$ is defined as the optimal solution to the power allocation problem 
\begin{equation}
\begin{aligned}
\label{eq:boche}
& \underset{\mathbf{p}}{\text{min}}\,\,||\mathbf{p}'||_1 & \text{s.t.}\,\, \text{SINR}'_k(\mathbf{p}',\sigma_k^2,\mathbf{\bar{W}},\mathbf{H}) \geq \gamma_k,\, \forall k,
\end{aligned}
\end{equation}
where $\text{SINR}'_k$ denotes the SINR of user $k$ given a set of fixed beam-formers $\mathbf{\bar{W}}$ a set of channels $\mathbf{\bar{H}}$ and noise variances $\sigma_k^2$.  From \cite{Boche2002} we know that the solution to \eqref{eq:boche} (if it exists) to the optimization problem \eqref{eq:boche} is unique and characterized by
\begin{equation}
\label{eq:optscaling}
(\mathbf{I} - \mathbf{D}(\mathbf{H},\mathbf{\bar{W}})\boldsymbol{\Psi}(\mathbf{H},\mathbf{\bar{W}}))\mathbf{p} = \mathbf{D}(\mathbf{H},\mathbf{\bar{W}})\boldsymbol{\sigma},
\end{equation}
which is an equivalent formulation of the SINR constraints \eqref{eq:centralizedSINR} being fulfilled tightly. 
Additionally, given theorem 4 in \cite{Boche2002} we have that the solution $\mathbf{p} > 0$ exists if and only if the matrix $\mathbf{D}(\mathbf{\bar{H}},\mathbf{\bar{W}})\boldsymbol{\Psi}(\mathbf{\bar{H}},\mathbf{\bar{W}})$ has spectral radius strictly smaller than 1, i.e. $\rho(\mathbf{D}(\mathbf{\bar{H}},\mathbf{\bar{W}})\boldsymbol{\Psi}(\mathbf{\bar{H}},\mathbf{\bar{W}})) < 1$.
In other words, by solving \eqref{eq:optscaling} we are capable of finding a feasible set of beam-formers for $\mathbf{H} \neq \mathbf{\bar{H}}$ in a neighborhood of $\mathbf{\bar{H}}$ by scaling the strictly feasible beam-formers for $\mathbf{\bar{H}}$ by the powers obtained using $\mathbf{p} = (\mathbf{I} - \mathbf{D}(\mathbf{H},\mathbf{\bar{W}})\boldsymbol{\Psi}(\mathbf{H},\mathbf{\bar{W}}))^{-1}\mathbf{D}(\mathbf{H},\mathbf{\bar{W}})\boldsymbol{\sigma}$, i.e. $\mathbf{w}_k = \sqrt{p_k^{\star}}\mathbf{\bar{w}}_k$, relying on the fact that the spectral radius, $\rho(\mathbf{D}(\mathbf{\bar{H}},\mathbf{\bar{W}})\boldsymbol{\Psi}(\mathbf{\bar{H}},\mathbf{\bar{W}})),$ is \emph{strictly} smaller than 1. Note that the scaling can be claimed to be continuous with the channels because of the continuity of the matrices $\boldsymbol{\Psi}(\mathbf{H},\mathbf{\bar{W}})$ and $\mathbf{D}(\mathbf{H},\mathbf{\bar{W}})$ with the channels.
This argument establishes that for a neighborhood of $\mathbf{\bar{H}}$ that fulfills 
\begin{equation}
\mathcal{N}(\mathbf{\bar{H}}) \triangleq \{\mathbf{H}| \rho(\mathbf{D}(\mathbf{H},\mathbf{\bar{W}})\boldsymbol{\Psi}(\mathbf{H},\mathbf{\bar{W}})) < 1 \},
\end{equation}
there exists a continuous scaling $\mathbf{p}$ given by \eqref{eq:optscaling} providing feasible beam-formers. This implies that given a sequence of channels $\mathbf{H}_n \rightarrow \mathbf{\bar{H}}$ and $\mathbf{\bar{W}} \in \mathcal{W}(\mathbf{\bar{H}})$, there exists an $m$ and a sequence $\{\mathbf{W}_n\}$ such that for all $n \geq m$ $\mathbf{W}_n \in \mathcal{W}(\mathbf{H}_n)$. In particular, given a feasible beam-former $\mathbf{\bar{W}} \in \mathcal{W}(\mathbf{\bar{H}})$ we can generate by using \eqref{eq:optscaling} feasible beam-formers in the neighborhood of $\mathbf{\bar{H}}$ thus concluding the proof. By invoking Theorem \ref{theorem:continuity} the continuity of the function $\mathbf{W}^{\star}(\mathbf{H})$ follows.
\end{proof}

We now introduce and prove the following lemma regarding the continuity of the dual multipliers.
\begin{lemma}
\label{lemma:dualcont}
Given the set of $\gamma_k-$feasible channels $\mathcal{H}$ for a specific user base station allocation and SINR requirements, the optimal dual multiplers $\{\lambda_k^{\star}\},\,k=1,\hdots,K$ are continuous functions of the channel $\mathbf{H}$.
\end{lemma}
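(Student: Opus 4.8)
The plan is to reduce the continuity of the multipliers $\{\lambda_k^\star\}$ to the already-established continuity of the primal solution $\mathbf{W}^\star(\mathbf{H})$ in Lemma~\ref{lemma:primalcont} by exploiting the KKT system of the centralized problem \eqref{eq:allcentralized}. Since $\mathcal{H}$ consists of \emph{strictly} $\gamma_k$-feasible channels, Slater's condition holds everywhere on $\mathcal{H}$; strong duality then guarantees that the optimal multipliers $\lambda_k^\star \geq 0$ associated with the SINR constraints \eqref{eq:centralizedSINR} exist, and, because the problem minimizes transmit power subject to QoS constraints, every SINR constraint is active at the optimum. The idea is to write the stationarity condition of the Lagrangian as a linear system in $\boldsymbol{\lambda}$ whose coefficients depend continuously on $\mathbf{H}$ through $\mathbf{W}^\star(\mathbf{H})$, and then to invert it.

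Concretely, I would set the gradient of the Lagrangian with respect to each beam-former $\mathbf{w}_k$ to zero at the optimum, yielding
\begin{equation}
\Big( \mathbf{I} - \frac{\lambda_k}{\gamma_k}\mathbf{h}_{b(k)k}\mathbf{h}_{b(k)k}^H + \sum_{j \neq k} \lambda_j \mathbf{h}_{b(k)j}\mathbf{h}_{b(k)j}^H \Big) \mathbf{w}_k^\star = \mathbf{0}.
\end{equation}
Left-multiplying by $(\mathbf{w}_k^\star)^H$ removes the unknown beam-former direction and leaves, for each $k$, a single scalar identity; stacking these over $k=1,\ldots,K$ produces the linear system
\begin{equation}
\big( \mathbf{D}(\mathbf{H}) - \boldsymbol{\Phi}(\mathbf{H}) \big)\, \boldsymbol{\lambda} = \mathbf{p}(\mathbf{H}),
\end{equation}
where $\mathbf{D}(\mathbf{H}) = \mathrm{diag}\{ \gamma_k^{-1}|\mathbf{h}_{b(k)k}^H\mathbf{w}_k^\star|^2 \}$, $[\boldsymbol{\Phi}(\mathbf{H})]_{kj} = |\mathbf{h}_{b(k)j}^H\mathbf{w}_k^\star|^2$ for $j \neq k$ (with zero diagonal), and $[\mathbf{p}(\mathbf{H})]_k = \|\mathbf{w}_k^\star\|^2$. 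Because each entry is a continuous function of $\mathbf{H}$ and of $\mathbf{W}^\star(\mathbf{H})$, and the latter is continuous by Lemma~\ref{lemma:primalcont}, the matrix $\mathbf{M}(\mathbf{H}) = \mathbf{D}(\mathbf{H}) - \boldsymbol{\Phi}(\mathbf{H})$ and the vector $\mathbf{p}(\mathbf{H})$ are continuous in $\mathbf{H}$ over $\mathcal{H}$.

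It remains to show that $\mathbf{M}(\mathbf{H})$ is nonsingular, which is the main obstacle and also what makes the multipliers unique. Factoring $\mathbf{M}(\mathbf{H}) = \mathbf{D}(\mathbf{H})(\mathbf{I} - \mathbf{D}(\mathbf{H})^{-1}\boldsymbol{\Phi}(\mathbf{H}))$, the diagonal of $\mathbf{D}(\mathbf{H})$ is strictly positive since $|\mathbf{h}_{b(k)k}^H\mathbf{w}_k^\star|^2 > 0$ at any $\gamma_k$-feasible point with $\gamma_k > 0$, and the nonnegative matrix $\mathbf{D}(\mathbf{H})^{-1}\boldsymbol{\Phi}(\mathbf{H})$ has spectral radius strictly smaller than one at a feasible operating point by the same Perron--Frobenius argument invoked in Lemma~\ref{lemma:primalcont} and in \cite{Boche2002}. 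Hence $\mathbf{I} - \mathbf{D}(\mathbf{H})^{-1}\boldsymbol{\Phi}(\mathbf{H})$, and therefore $\mathbf{M}(\mathbf{H})$, is invertible for every $\mathbf{H} \in \mathcal{H}$, and $\boldsymbol{\lambda}^\star(\mathbf{H}) = \mathbf{M}(\mathbf{H})^{-1}\mathbf{p}(\mathbf{H})$ is the unique dual solution. Since matrix inversion is continuous on the open set of invertible matrices and $\mathbf{M}(\mathbf{H})$, $\mathbf{p}(\mathbf{H})$ are continuous, $\boldsymbol{\lambda}^\star(\mathbf{H})$ is continuous on $\mathcal{H}$, as claimed. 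The only delicate point is the uniform nonsingularity of $\mathbf{M}(\mathbf{H})$; should the explicit route prove cumbersome, an alternative is to invoke a Fiacco-type sensitivity theorem (as done for the primal in Theorem~\ref{theorem:continuity}) together with the linear independence of the active-constraint gradients guaranteed by the spectral-radius condition.
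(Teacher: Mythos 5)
Your proposal is correct in substance, but it takes a genuinely different route from the paper's. The paper proves Lemma~\ref{lemma:dualcont} via up-link--down-link duality: it identifies the dual of \eqref{eq:allcentralized} with the up-link beam-forming problem \eqref{eq:uplinkdual}, notes that the optimal up-link (MMSE-type) beam-formers are parallel to the optimal down-link beam-formers, invokes Lemma~\ref{lemma:primalcont} to obtain continuity of those directions, and then reads off $\lambda_k^{\star}$ as scaled up-link powers. You instead work directly on the KKT stationarity condition of the centralized problem, project it onto $\mathbf{w}_k^{\star}$ to get the linear system $\mathbf{M}(\mathbf{H})\boldsymbol{\lambda}=\mathbf{p}(\mathbf{H})$, and conclude by continuity of matrix inversion. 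Your argument is more elementary and self-contained (it bypasses the duality machinery of \cite{Dahrouj2010} entirely), and it delivers uniqueness of the multipliers as a by-product; the price is that you must establish invertibility of $\mathbf{M}(\mathbf{H})$ yourself, whereas the paper inherits all the needed structure from Lemma~\ref{lemma:primalcont}.

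The one step that needs more care than an appeal to ``the same Perron--Frobenius argument'' is precisely that invertibility claim, because your coupling matrix is not the one appearing in Lemma~\ref{lemma:primalcont}. In the paper's notation $[\boldsymbol{\Psi}]_{kj}=|\mathbf{h}_{b(j)k}^H\mathbf{w}_j^{\star}|^2$ (interference \emph{suffered} by user $k$), whereas your $[\boldsymbol{\Phi}]_{kj}=|\mathbf{h}_{b(k)j}^H\mathbf{w}_k^{\star}|^2$ (interference \emph{caused} by user $k$), i.e. $\boldsymbol{\Phi}=\boldsymbol{\Psi}^T$, and your $\mathbf{D}(\mathbf{H})^{-1}$ coincides with the paper's $\mathbf{D}(\mathbf{H},\mathbf{W}^{\star})$. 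Feasibility of $\mathbf{W}^{\star}$ together with $\sigma_k^2>0$ bounds the \emph{row} sums of $\mathbf{D}\boldsymbol{\Psi}$ strictly below one, hence $\rho(\mathbf{D}\boldsymbol{\Psi})<1$, but it says nothing directly about the row sums of your matrix $\mathbf{D}\boldsymbol{\Psi}^T$. The gap closes in one line,
\begin{equation}
\rho\big(\mathbf{D}\boldsymbol{\Psi}^T\big)=\rho\big(\boldsymbol{\Psi}^T\mathbf{D}\big)=\rho\big((\mathbf{D}\boldsymbol{\Psi})^T\big)=\rho\big(\mathbf{D}\boldsymbol{\Psi}\big)<1,
\end{equation}
using $\rho(\mathbf{A}\mathbf{B})=\rho(\mathbf{B}\mathbf{A})$ and the fact that $\mathbf{D}$ is diagonal and real; this identity is exactly the up-link--down-link feasibility duality, so your route and the paper's ultimately rest on the same underlying fact. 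Finally, note that \emph{uniform} nonsingularity of $\mathbf{M}(\mathbf{H})$ is not needed: pointwise invertibility plus continuity of $\mathbf{M}(\mathbf{H})$ and $\mathbf{p}(\mathbf{H})$ already give continuity of $\mathbf{M}(\mathbf{H})^{-1}\mathbf{p}(\mathbf{H})$ at every $\mathbf{H}\in\mathcal{H}$; compactness of $\mathcal{H}$ would only be relevant if one wanted uniform continuity or a uniform bound on the multipliers.
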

\begin{proof}
As shown in \cite{Dahrouj2010} and reviewed in \cite{Bj2014}  the dual problem of \eqref{eq:allcentralized} can be expressed as
\begin{subequations}
\label{eq:dual}
\begin{align}
& \underset{\{\lambda_{k}\}}{\text{min}} \qquad \qquad \qquad \qquad \qquad  \sum_{b =1}^B\sum_{k \in \mathcal{U}(b)} \lambda_{k}\sigma_{k}^2 \\
& \text{s.t.} \quad \mathbf{I} + \sum_{j=1}^{K} \lambda_{j}\mathbf{h}_{b(k)j}\mathbf{h}_{b(k)j}^H \succeq (1 + \frac{1}{\gamma_{k}})\lambda_k \mathbf{h}_{b(k)k}\mathbf{h}_{b(k)k}^H\,, \label{eq:dualsinr}\\
& \qquad \qquad \qquad \qquad \qquad k =1,\hdots, K.
\end{align}
\end{subequations}
The dual problem in \eqref{eq:dual} has been shown \footnote{Note that \cite{Dahrouj2010} contains a technical error in the proof of Theorem 1 between equations (12) and (13). However, this does not compromise the validity of the result. An alternative proof can be provided by using the fact that given a symmetric positive semidefinite matrix $\mathbf{A} \in \mathbb{C}^{n \times n}$ and an $n \times 1$ vector $\mathbf{b}$ in the row space of $\mathbf{A},$ $\mathbf{A} \succeq \mathbf{bb}^H$ iff $\mathbf{b}^H\mathbf{A}^{\dagger}\mathbf{b} \leq 1.$} to be equivalent to solving the following up-link beam-forming problem yielding the down-link up-link duality result in \cite{Dahrouj2010}:
\begin{subequations}
\label{eq:uplinkdual}
\begin{align}
& \underset{\{\lambda_k\}}{\text{min}} & \sum_{k=1}^K \lambda_k \sigma_k^2 \\
& \text{s.t.} & \eta_k \geq \sigma_k \label{eq:uplinksinr},
\end{align}
\end{subequations}
where $\eta_{k} = \underset{\mathbf{w}_{k}^d: ||\mathbf{w}_k^d|| = 1}{\text{max}}\frac{\lambda_{k}|\mathbf{w}_{k}^{dH}\mathbf{h}_{b(k)k}|^2}{\sum_{ j \neq k} \lambda_{j}|\mathbf{w}_{k}^{dH}\mathbf{h}_{b(k)j}|^2 + \sigma_k^2||\mathbf{w}_k^d||^2}$ and where $\mathbf{w}_k^d$ denote the up-link beam-formers. It can be shown \cite[Theorem 1]{Dahrouj2010}, that given a fixed set of values $\{\lambda_j\}$ the down-link beam-formers maximizing $\eta_k$ follows the expression $\mathbf{w}_{k}^d(\{\lambda_j\}) = \frac{\left( \sum_{j} \lambda_{j}\mathbf{h}_{b(k)j}\mathbf{h}_{b(k)j}^H + \sigma_k^2\mathbf{I}\right)^{-1}\mathbf{h}_{b(k)k}}{||\left( \sum_{j} \lambda_{j}\mathbf{h}_{b(k)j}\mathbf{h}_{b(k)j}^H + \sigma_k^2\mathbf{I}\right)^{-1}\mathbf{h}_{b(k)k}||},$ and that $\mathbf{w}^d_k$ are a scaled version of the optimal down-link beam-formers for the optimal multipliers $\lambda_k^{\star},$ i.e. $\mathbf{w}_{k}^{\star} = \sqrt{p_{k}^{\star}}\mathbf{w}^d_k(\{\lambda_j^{\star}\}),$ where $p_{k}^{\star}$ is the optimal power allocation to user $k$. This establishes, by  Lemma \ref{lemma:primalcont} uniqueness and continuity with the channels of the normalized up-link beam-formers, i.e. the mapping
\begin{equation}
\begin{aligned}
&\mathcal{W}^{d\star}(\mathbf{H}) \triangleq \\ &\left\{ \mathbf{w}_{k}^{d\star} \,|\, \mathbf{w}_{k}^{d\star} = \frac{\left( \sum_{j} \lambda_{j}^{\star}\mathbf{h}_{b(k)j}\mathbf{h}_{b(k)j}^H + \sigma_k^2\mathbf{I}\right)^{-1}\mathbf{h}_{bk}}{||\left( \sum_{j} \lambda_{j}^{\star}\mathbf{h}_{b(k)j}\mathbf{h}_{b(k)j}^H + \sigma_k^2\mathbf{I}\right)^{-1}\mathbf{h}_{b(k)k}||} \right\},
\end{aligned}
\end{equation}
 where $\{\lambda_k^{\star}\}$ are the optimal solutions to \eqref{eq:dual}, is closed and open for all $\mathbf{H} \in \mathcal{H}$. This establishes the continuity of the function $\mathbf{W}^{\star d}(\mathbf{H})$.
By uplink-downlink duality \cite{Dahrouj2010} the optimal dual multipliers $\{\lambda_j^{\star}\}$ correspond to the scaled powers of the up-link beam-formers, i.e. $||\mathbf{w}_k^{\star d}||^2 = \lambda_k \sigma^2_k$, which by the continuity of $\mathbf{w}_k^{\star d}$ establishes the continuity of $\lambda_k^{\star}$ and proves the Lemma.
\end{proof}

The continuity of the optimal and primal dual variables of the centralized problem in \eqref{eq:allcentralized} has now been proven. It is possible to show not only continuity, but differentiability of the function mapping channels to the unique optimal primal-dual solution by writing the problem as a standard SOCP and using the results in \cite{Alizadeh2003}. This yields Lipschitz continuity within a set of compact channels since it would allow bounding the Jacobian matrix's largest eigenvalue. However, this result is more involved and not required for the proofs at hand.

We now proceed to prove that (A3) holds. For this purpose, we require proving continuity of the elements $\mathbf{w}^{[i]},\,t^{[i]}_{mk}$, resulting at each iteration $i$ of ADMM, with respect to the parameters fed to the algorithm  at iteration $i$, i.e. consistency variables $\boldsymbol{\tau}^{[i-1]}$ and duals $\boldsymbol{\nu}^{[i-1]}$ resulting from iterate $i-1$ and with respect to the channels. This will imply  continuity of all the ADMM parameters $\boldsymbol{\tau},\boldsymbol{\nu}$ since the rest of the steps are updated linearly with $t^{[i]}_{mk}$. We proceed, using the same methodology as in the centralized case, to show that the optimization problem in Algorithm \ref{alg:ADMMdistr} yields continuous primal solutions.

\begin{lemma}
\label{lemma:distrcont}
Given the set of $\gamma_k-$feasible channels $\mathcal{H}$ for a specific user-base station allocation and SINR requirements and the parameters $\boldsymbol{\tau}^{[i-1]}$ and $\boldsymbol{\nu}^{[i-1]}$ provided for iterate $i$ in algorithm \ref{alg:ADMMdistr}, the ADMM parameters provided for iterate $i$, i.e. $\boldsymbol{\tau}^{[i]},$  $\boldsymbol{\nu}^{[i]}$ and the obtained primal solution $\mathbf{w}^{[i]},\,\mathbf{t}^{[i]}$ are continuous functions of $\mathbf{H}$, $\boldsymbol{\tau}^{[i-1]}$ and of $\boldsymbol{\nu}^{[i-1]}$.
\end{lemma}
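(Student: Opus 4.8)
The plan is to prove Lemma~\ref{lemma:distrcont} by re-using the parametric-programming machinery already developed for the centralized problem, observing that the only structural change in the per-base-station subproblem \eqref{eq:ADMMdistr} is that the objective \eqref{eq:objective} now carries the affine term $(\boldsymbol{\nu}_b^{[i-1]})^T(\mathbf{t}_b - \mathbf{E}_b\boldsymbol{\tau}^{[i-1]})$ and the quadratic penalty $\tfrac{\rho}{2}\|\mathbf{t}_b - \mathbf{E}_b\boldsymbol{\tau}^{[i-1]}\|^2$, whereas the constraints \eqref{eq:type1}--\eqref{eq:type2} depend on $\mathbf{H}$ and on the decision variables alone. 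I would therefore treat the triple $(\mathbf{H},\boldsymbol{\tau}^{[i-1]},\boldsymbol{\nu}^{[i-1]})$ as the problem data and invoke the general parametric continuity result underlying Theorem~\ref{theorem:continuity} (i.e.\ \cite{Fiacco1990}), verifying its three hypotheses for this subproblem.

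First I would check continuity and uniqueness. The augmented objective in \eqref{eq:objective} is jointly continuous in the decision variables $(\{\mathbf{w}_j\},\mathbf{t}_b)$ and in the data $(\boldsymbol{\tau}^{[i-1]},\boldsymbol{\nu}^{[i-1]})$, so the continuity hypothesis holds. For uniqueness, note that $\sum_{j}\|\mathbf{w}_j\|^2$ is strictly convex in the beam-formers once the phases are fixed by the SOCP convention of Section~\ref{section:problemformulation} (each $\mathbf{w}_j$, $j\in\mathcal{U}(b)$, has its phase pinned by its own constraint in \eqref{eq:type1}), while the penalty $\tfrac{\rho}{2}\|\mathbf{t}_b - \mathbf{E}_b\boldsymbol{\tau}^{[i-1]}\|^2$ is strictly convex in $\mathbf{t}_b$; since there are no cross terms the objective is jointly strictly convex and coercive, so the minimizer $(\mathbf{w}^{[i]},\mathbf{t}_b^{[i]})$ exists and is unique. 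This is exactly the point where the penalty restores the strong convexity that was lost globally in \eqref{eq:finaldistribute-objective}, making the uniqueness hypothesis hold despite the decomposition.

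Second, I would establish that the feasible-set mapping, now a function of $\mathbf{H}$ alone, is closed and open. Closedness follows verbatim from Lemma~\ref{lemma:primalcont}: the functions \eqref{eq:distributedSINR} and \eqref{eq:distributedInterference} are continuous in $(\mathbf{W}_b,\mathbf{t}_b,\mathbf{H}_b)$, so the inequalities $\mathrm{SINR}_k\geq\gamma_k$ and $\mathrm{INT}_{bj}\geq 0$ survive limits. For openness I would again use a scaling construction: starting from a strictly feasible point, a small perturbation of $\mathbf{H}$ is absorbed by rescaling the beam-formers exactly as in the Boche-type argument of Lemma~\ref{lemma:primalcont}, together with a small increase of the caused-interference copies $t_{bj}^{(b)}$ to keep \eqref{eq:type2} satisfied. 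A useful simplification is that the interference copies in $\mathbf{t}_b$ are free decision variables, which only adds slack and hence makes openness easier than in the centralized case; in particular, whenever $\mathbf{H}\in\mathcal{H}$ is $\gamma_k$-feasible, taking the centralized strictly feasible beam-formers and setting $\mathbf{t}_b$ to the corresponding interference values (with a margin on the caused copies) yields a strictly feasible point of the subproblem.

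With the three hypotheses in place, the parametric continuity theorem yields continuity of $(\mathbf{w}^{[i]},\mathbf{t}^{[i]})$ in $(\mathbf{H},\boldsymbol{\tau}^{[i-1]},\boldsymbol{\nu}^{[i-1]})$. Continuity of the remaining ADMM parameters is then immediate: $\boldsymbol{\tau}^{[i]}=\mathbf{E}^{\dagger}\mathbf{t}^{[i]}$ in step~\ref{step:ziterate} is a linear image of $\mathbf{t}^{[i]}$, and $\boldsymbol{\nu}_b^{[i]}=\boldsymbol{\nu}_b^{[i-1]}+\rho(\mathbf{t}_b^{[i]}-\mathbf{E}_b\boldsymbol{\tau}^{[i]})$ in step~\ref{step:dualupdate} is affine in quantities already shown continuous, so $\boldsymbol{\tau}^{[i]}$ and $\boldsymbol{\nu}^{[i]}$ are continuous, establishing (A3). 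The step I expect to be the main obstacle is the openness verification: one must exhibit a strictly feasible point that persists under a joint channel perturbation while consistently choosing both the suffered- and caused-interference copies, and argue that the scaling map of Lemma~\ref{lemma:primalcont} extends cleanly to the enlarged decision vector $(\mathbf{w},\mathbf{t}_b)$.
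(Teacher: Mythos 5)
Your proposal is correct and follows essentially the same route as the paper's own proof: both fold the dual term into the quadratic penalty, invoke the parametric continuity machinery of Theorem~\ref{theorem:continuity} with uniqueness from the (now restored) strong convexity of \eqref{eq:objective}, reuse the closedness argument of Lemma~\ref{lemma:primalcont}, and establish openness via the Boche scaling of the beam-formers combined with an upward rescaling of the caused-interference copies $t_{bj}^{(b)}$ to preserve \eqref{eq:type2}. The step you flag as the main obstacle is exactly what the paper resolves with the explicit choice $t_{bj}^{(b)2} = \sqrt{\max_j(p_j)}\,t_{bj}^{(b)[i]2}$.
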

\begin{proof}
 Let us equivalently (in the sense that it yields an equivalent problem) rewrite the objective function in \eqref{eq:objective} as $\sum_{k \in \mathcal{U}(b)} ||\mathbf{w}_k||^2 + \frac{\rho}{2}||\mathbf{t}_b - \mathbf{E}_b\boldsymbol{\tau}^{[i-1]} + \frac{\boldsymbol{\nu}_b}{\rho}||^2$. For simplicity define $\mathbf{y}_b^{[i-1]} \triangleq \mathbf{E}_b\boldsymbol{\tau}^{[i-1]} - \frac{\boldsymbol{\nu}_b^{[i-1]}}{\rho}$. Note now that the interference constraints \eqref{eq:type2}) might not always hold tightly as opposed to the SINR constraints  \eqref{eq:type1}. This is due to the fact that $t_{bj}^{(b)}$, appearing in \eqref{eq:type2}, is selected to fulfill the constraint, but at the same time to be close to the corresponding value in $\mathbf{y}_{b}$ as possible so as to minimize the objective. Hence, given a set of values $(\{\mathbf{w}_k\},\{t_{mk}^{(b)}\}_{m \neq b,k \in  \mathcal{U}(b)})$, corresponding to the beam-formers and suffered interference values, fulfilling the SINR constraints the problem will always be feasible since the caused interference values $\{t_{bj}^{(b)}\}_{j \not \in \mathcal{U}(b)}$ can always be selected accordingly. For this reason, the coming analysis will prioritize the fulfillment of the SINR constraints \eqref{eq:type1} and deal with the interference constraints \eqref{eq:type2} later on.

The conditions required to establish continuity are uniqueness of the primal solution, and that the feasible set
\begin{equation}
\label{eq:primalset}
\mathcal{WT}(\mathbf{H}) \triangleq \{(\mathbf{W},\mathbf{t}) | \text{\eqref{eq:type1} and \eqref{eq:type2} hold } \forall k,j,b \},
\end{equation}
 corresponding to the feasible sets of beam-formers and estimated interference values $\mathbf{t}_b$, is both closed and open for all $\mathbf{H}$ and $\mathbf{y}  \triangleq \mathbf{E}\boldsymbol{\tau} - \frac{\boldsymbol{\nu}}{\rho}$. Note that the feasible set does not explicitly depend on the parameter $\mathbf{y}$ since the feasibility of a beam-former will not be affected by $\mathbf{y}$.

 The proof that $\mathcal{WT}(\mathbf{H})$ is closed is analogous to the centralized case (proof of Lemma \ref{lemma:primalcont}) and will therefore be omitted. Uniqueness of the primal solution follows from the strong convexity of the objective function in \eqref{eq:objective}. The proof of $\mathcal{WT}(\mathbf{H})$ being open is very similar but provides some insight to the problem and will therefore be included.
 Given a set of channels $\mathbf{\bar{H}}$ and parameters $\mathbf{\bar{y}}$, assume each base station has performed an iteration of the ADMM algorithm and found the corresponding optimal solutions. We will then have that all SINR constraints \eqref{eq:type1} will hold tightly, while all interference constraints \eqref{eq:type2} will be either not active, weakly active, or active depending on the values in $\mathbf{\bar{y}}$. Analogously to before, given an optimal point, the SINR constraints, corresponding to all users and therefore all base stations,  can be expressed as $(\mathbf{I} - \mathbf{D}(\mathbf{H},\mathbf{W})\boldsymbol{\Psi}(\mathbf{H},\mathbf{W}))\mathbf{1} = \mathbf{D}(\mathbf{H},\mathbf{W}) \boldsymbol{\eta}$, where in this case $\boldsymbol{\eta} \triangleq  (\sum_{m \neq b}(t_{mj_1}^{(m)[i]})^2 + \sigma_{j_1}^2 ,\hdots, \sum_{m \neq b}(t_{mj_{|\mathcal{U}(b)|}}^{(m)[i]})^2) + \sigma_{j_{|\mathcal{U}(b)|}}^2)^T,$ where $j_k \in \mathcal{U}(b)$.  Given a second set of channels and parameters $\mathbf{H}$ and $\mathbf{y}$, if the optimal set of beam-formers and interference levels corresponding to $\mathbf{\bar{H}}$ and $\mathbf{\bar{y}}$ where used, the SINR constraints \eqref{eq:type1} may again not be fulfilled. We will circumvent this in the same way as before, implying therefore, that there will exist a scaling $\mathbf{p}$ that is continuous with the channel and allows us to produce a feasible set of beam-formers for $\mathbf{H}$ based on the optimal beam-formers for $\mathbf{\bar{H}}$. Note however, that with this new scaling, if the interference values are left untouched, and the interference constraints \eqref{eq:type2} might not be fulfilled. A simple way of solving this problem is to define the new interference values as $t_{bj}^{(b)2} = \sqrt{\text{max}_j(p_j)}t_{bj}^{(b)[i]2}$. From here, the proof is analogous to that of the centralized case.
\end{proof}
In this case, the problem can also be re-written as a standard SOCP and degeneracy conditions can be studied. However, one of the conditions required to show continuity and differentiability of the function that maps the channels to the optimal values is strict complementarity which is not fulfilled in this case when the interference constraints \eqref{eq:type2} are weakly active. In these cases differentiability of the mapping to the primal dual optimal solution is lost but, as proven, continuity is kept.

We have therefore proven, by showing that all point to set mappings representing the feasible sets are open and closed, using the fact that the optimal solutions are unique and invoking Theorem \ref{theorem:continuity} that all assumptions required for the tracking abilities of ADMM when deprived of strong convexity in \eqref{eq:alldisready} hold.
\section{Numerical experiments \label{section:numericalresults}}
This section provides numerical experiments to demonstrate the performance of the proposed dynamic beam-forming technique.  In the simulations the parameter $\gamma_k$ is set to 10 for all users and the plotted SINR (linear) corresponds to the average SINR achieved by the users using the beam-formers obtained after a single iteration. The initial channel vectors (in $\mathbf{H}^{[0]}$) and the innovation channels ($\mathbf{H}^{\text{inn}[i]}$) are generated following: $\mathbf{h}_{bk} \sim \mathcal{CN}(\mathbf{0}, \mathbf{I})$, i.e. they are independent complex circularly Gaussian random vectors with unit variance. A track is then generated as $\mathbf{H}^{[i]} = \sqrt{\zeta}\mathbf{H}^{\text{inn}[i]} + \sqrt{1-\zeta}\mathbf{H}^{[i-1]}$; however, this method might lead to channels that do not have a feasible solution for the required SINRs. In order to avoid tracking infeasible solutions each channel is checked for feasibility prior to feeding it to Algorithm \ref{alg:ADMMdistr}. In case the channel does not allow for a feasible solution it is discarded and replaced by a channel generated following the same innovation equation that is feasible. Even though this is not an appropriate choice when modeling the dynamics of a wireless channel, it allows us to illustrate the tracking ability of the algorithm for this specific problem while keeping the model simple. Note also that this update rule does not guarantee that a bound $||\mathbf{H}^{[i]} - \mathbf{H}^{[i-1]}|| \leq \epsilon$ is fulfilled. This is due to the fact that the innovation $\mathbf{H}^{\text{inn}[i]}$ can take arbitrarily large values. However, it will hold true that $\mathbb{E}\{||\mathbf{H}^{[i]} - \mathbf{H}^{[i-1]}||\} \leq \epsilon .$

The considered system consists of 2 base stations equipped with 4 antennas serving 2 users each.  In all cases, ADMM is initialized with $\boldsymbol{\tau}^{[0]} = \mathbf{0}$ and $\boldsymbol{\nu}^{[0]} = \mathbf{0}$. In case ADMM where to be used with a very large penalty parameter $\rho$ the solution would very slowly deviate from the zero forcing solution since we would be enforcing, initially, that the algorithm does not deviate from it. In figures \ref{fig:singletrack1}, \ref{fig:singletrack50} and \ref{fig:singletrack1000} the dynamic behaviour of the algorithm is illustrated for penalty parameters $\rho =1$, $\rho=50$ and $\rho= 1000$ respectively. It can be seen that the algorithm is in general capable of providing a set of beam-formers which use a similar total power as in the optimal case. We can also see that, even though the solution is not always feasible the achieved SINR levels are not far from 10 ($\gamma_k$). Additionally, as intuitively expected, the fulfillment of the SINR constraints is better as $\rho$ increases. This is due to the fact that we are assigning more weight to consensus among interference levels by selecting a larger $\rho$. Note, however, that by selecting $\rho$ we do not only select how important it is for us that the base stations are in agreement, but also the step size of the sub-gradient step in charge of maximizing the dual problem. Hence, a large value of $\rho$ implies a large step size which might make convergence slow. The same can happen in case of selecting a very small $\rho$. 
In order to illustrate this clearly we provide figures \ref{fig:average1}, \ref{fig:average50} and \ref{fig:average1000} where 10000 independent tracks consisting of 50 channel are generated and averaged, with $\rho=1$, $\rho = 50$ and $\rho=1000$, respectively. 
In this case the tracks are averaged and in case of the SINR we provide a 1 standard deviation shift from the average SINR in all cases. Observe that in figure \ref{fig:average1000} the dynamic solution yields, in average, powers superior to optimal even when the problem is not feasible, while in \ref{fig:average1} and \ref{fig:average50} the dynamic solution approaches the optimal on the opposite side, in other words, it is below the optimal solution. This is due to the selected initial values. A relatively small $\rho$ does not penalize the algorithm from deviation of the initial value, $\mathbf{0}$, and hence ADMM is free to select a power minimizing solution. On the other hand, when the parameter $\rho$ is larger, the solution provided by ADMM will be more similar to a zero-forcing solution, providing a feasible solution earlier but approaching the optimal value from above. 

As mentioned earlier, given a very small or large $\rho$ convergence is slow, however, when the step size is large, the SINR values are very close to feasibility. 
As in the static case, optimal parameter selection for $\rho$ is not known \cite{Boyd2010} except for specific cases \cite{Ghadimi2013}. In \cite{Joshi2012} it is experimentally shown that penalty parameters related to the channels provide quicker convergence. This could also be done in order to improve convergence of the ADMM algorithm, potentially improving the tracking ability. However, in order to normalize $\rho$ with respect to the problem's data we would require to centralize the CSI breaking the distributed nature of the algorithm.

\begin{figure}
\centering\includegraphics[width=21pc]{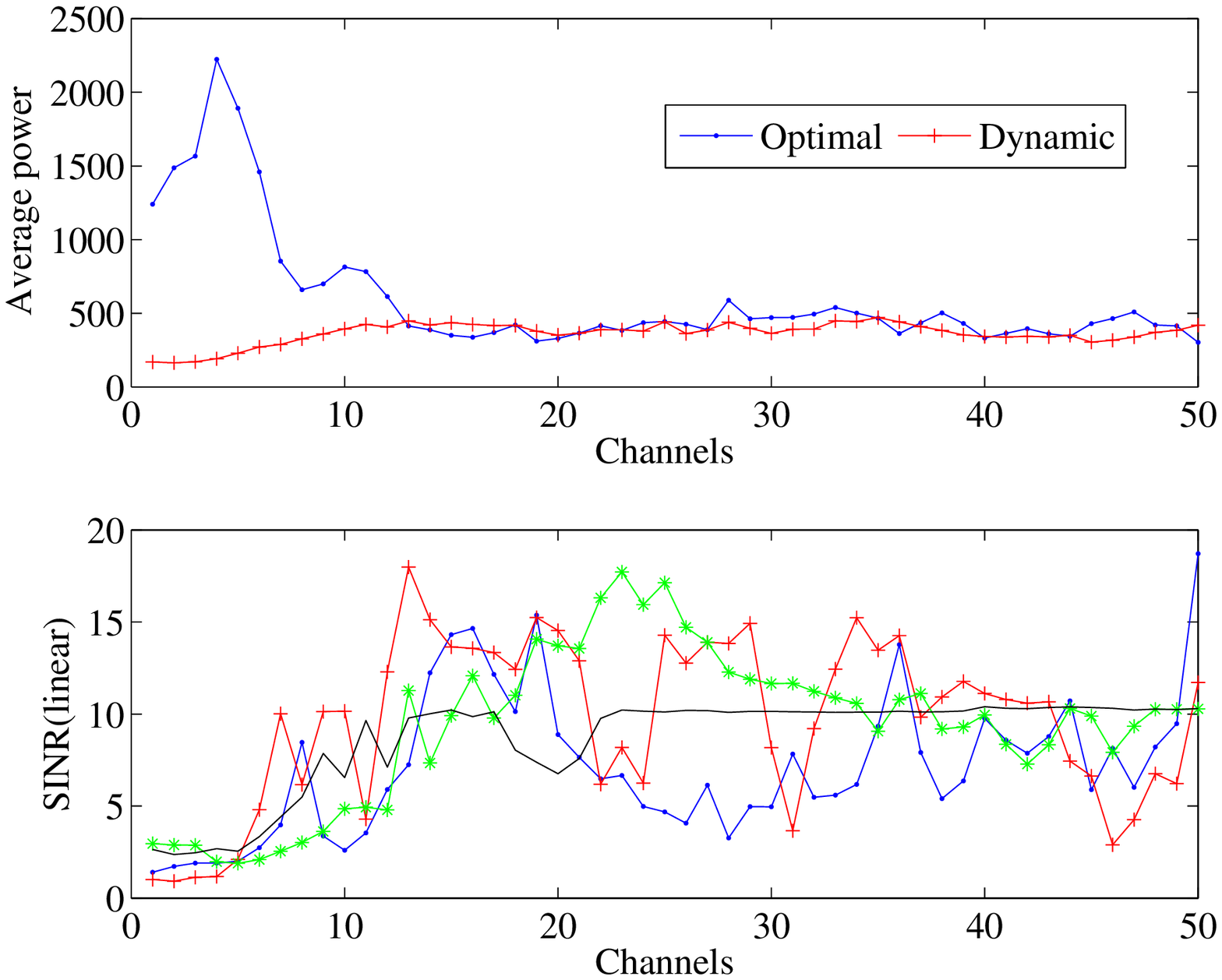}
\caption{Total transmit power and average user SINR for a single track of 50 channels, $\rho = 1$, $\boldsymbol{\gamma} = \mathbf{10}$, $\boldsymbol{\sigma} = \mathbf{\sqrt{10}} $ $\zeta = 0.01$, $N_t = 4$, $N_u = 4$, $N_b =2$.}
\label{fig:singletrack1}
\end{figure}
\begin{figure}
\centering\includegraphics[width=21pc]{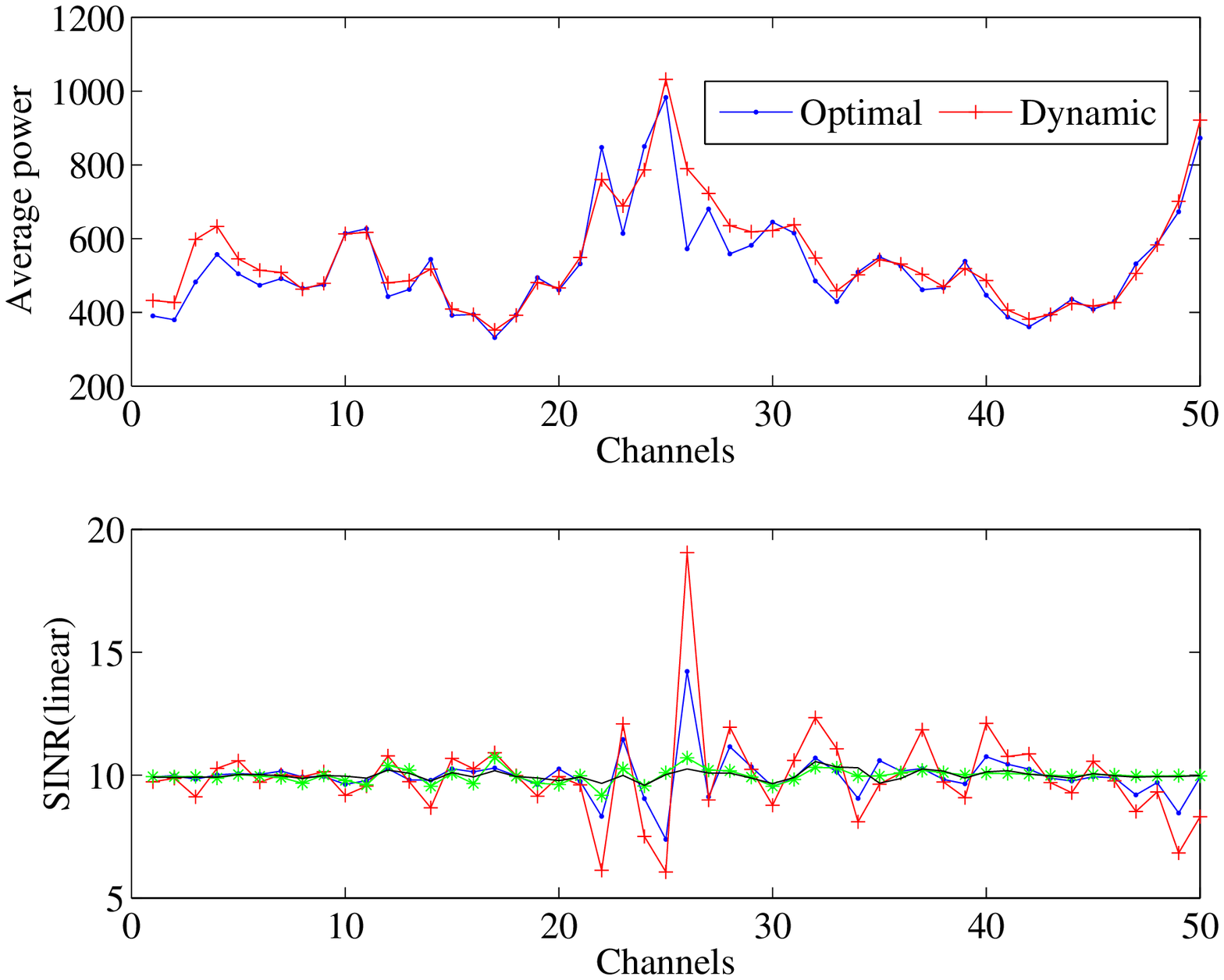}
\caption{Total transmit power and average user SINR for a single track of 50 channels, $\rho = 50$, $\boldsymbol{\gamma} = \mathbf{10}$,$\boldsymbol{\sigma} = \mathbf{\sqrt{10}}$,  $\zeta = 0.01$, $N_t = 4$, $N_u = 4$, $N_b =2$. The plot below illustrates each of the users perceived SINR.}
\label{fig:singletrack50}
\end{figure}
\begin{figure}
\centering\includegraphics[width=21pc]{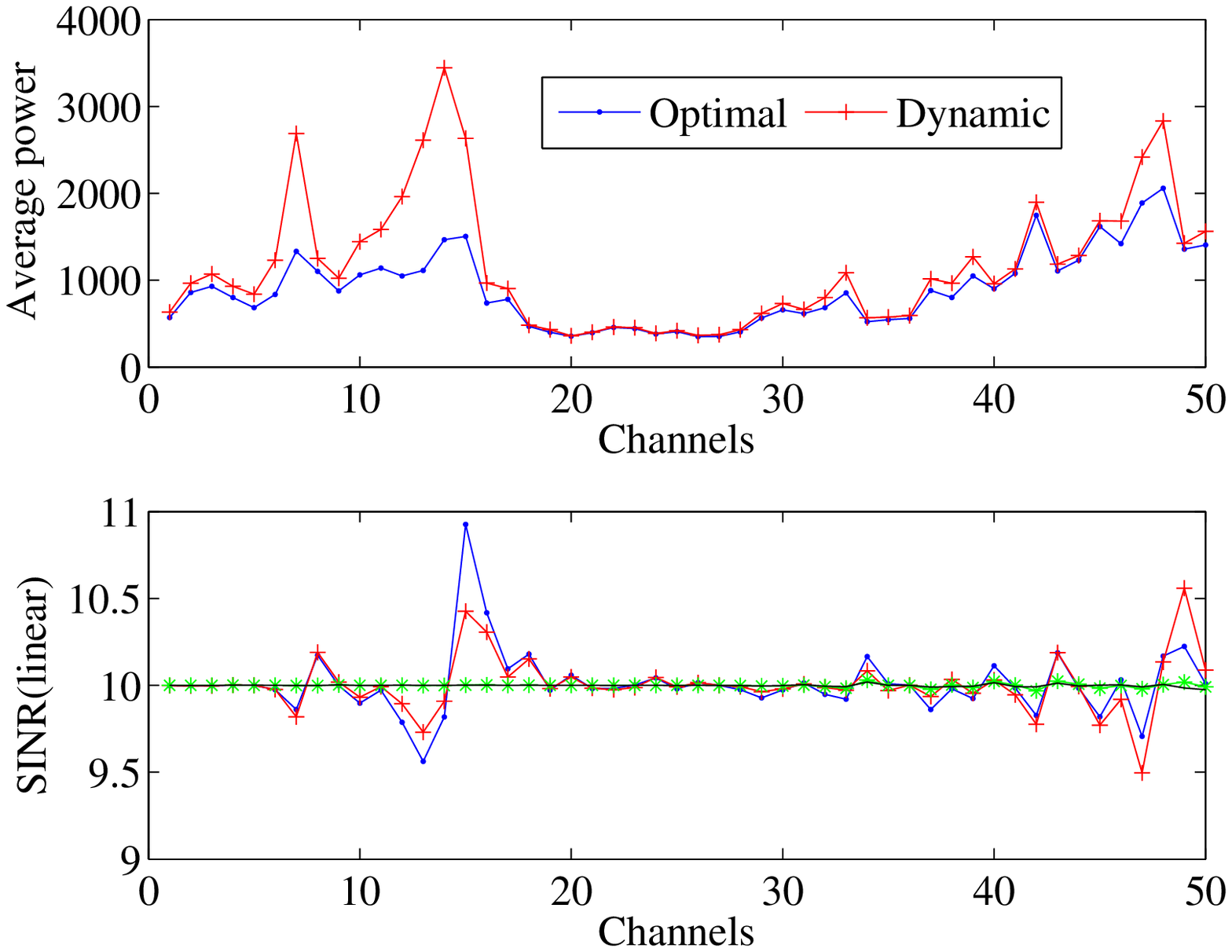}
\caption{Total transmit power and average user SINR for a a single track of 50 channels, $\rho = 1000$, $\boldsymbol{\gamma} = \mathbf{10}$, $\boldsymbol{\sigma} = \mathbf{\sqrt{10}}$ $\zeta = 0.01$, $N_t = 4$, $N_u = 4$, $N_b = 2$. The plot below illustrates each of the users perceived SINR.}
\label{fig:singletrack1000}
\end{figure}
\begin{figure}
\centering\includegraphics[width=21pc]{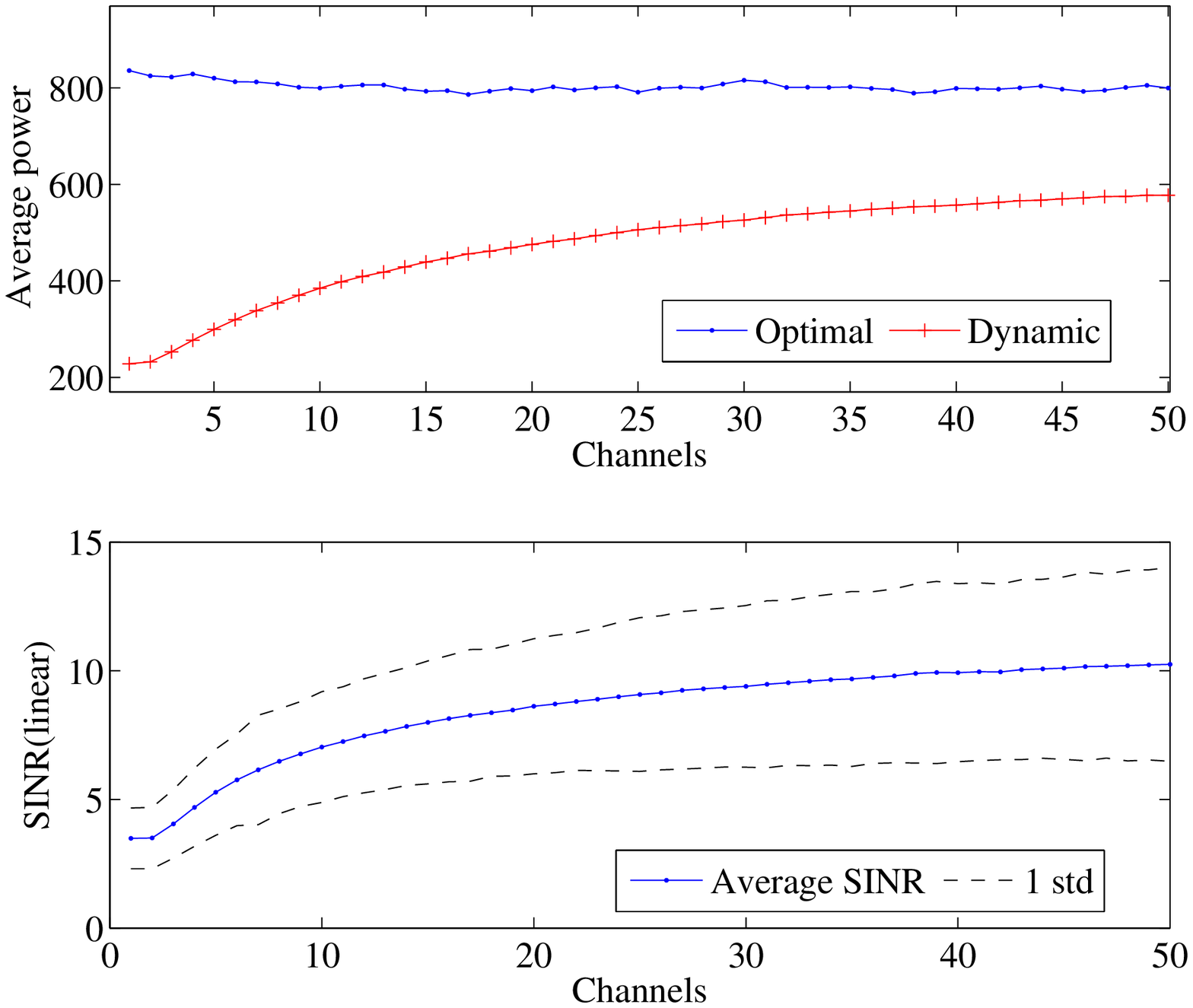}
\caption{Total average transmit power and average user SINR for 10000 tracks of 50 channels, $\rho = 1$, $\boldsymbol{\gamma} = \mathbf{10}$, $\boldsymbol{\sigma} = \mathbf{\sqrt{10}}$ $\zeta = 0.01$, $N_t = 4$, $N_u = 4$, $N_b = 2$.} 
\label{fig:average1}
\end{figure}
\begin{figure}
\centering\includegraphics[width=21pc]{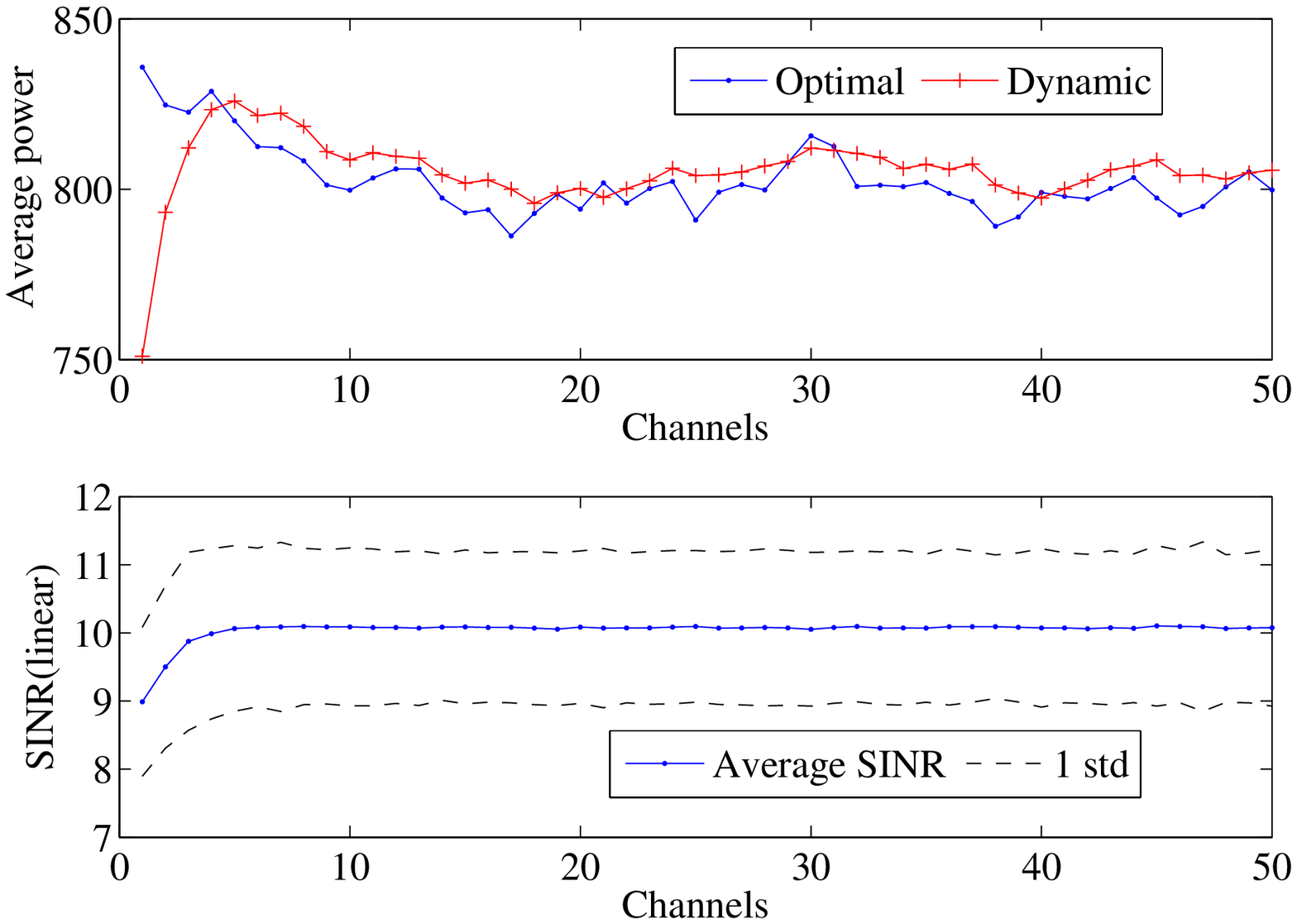}
\caption{Total average transmit power and average user SINR for 10000 tracks of 50 channels, $\rho = 50$, $\boldsymbol{\gamma} = \mathbf{10}$, $\boldsymbol{\sigma} = \mathbf{\sqrt{10}}$, $\zeta = 0.01$, $N_t = 4$, $N_u = 4$, $N_b = 2$.}
\label{fig:average50}
\end{figure}
\begin{figure}
\centering\includegraphics[width=21pc]{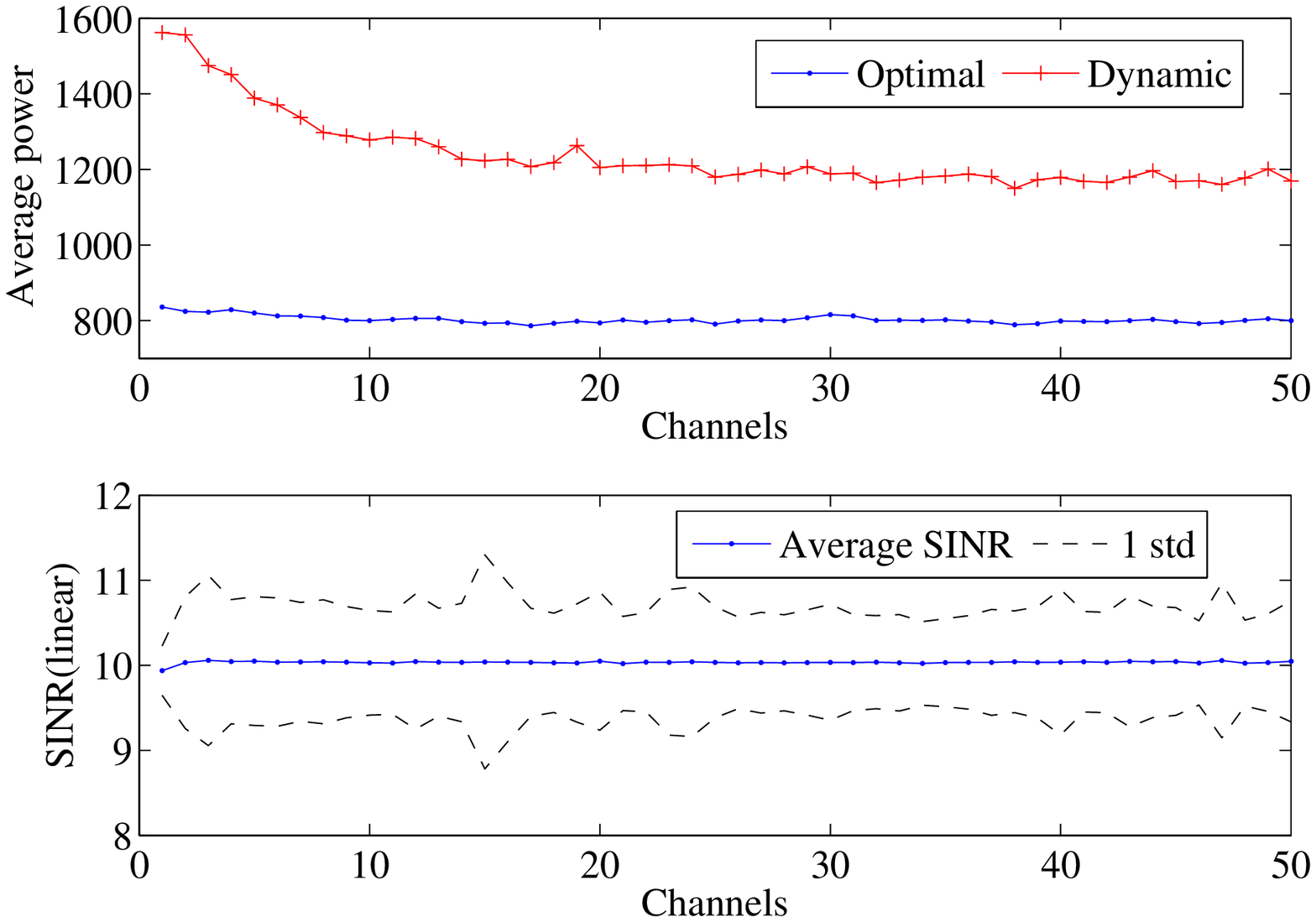}
\caption{Total average transmit power and average user SINR for 10000 tracks of 50 channels, $\rho = 1000$, $\boldsymbol{\gamma}= \mathbf{10}$, $\boldsymbol{\sigma} = \mathbf{\sqrt{10}}$, $\zeta = 0.01$, $N_t = 4$, $N_u = 4$, $N_b = 2$.}
\label{fig:average1000}
\end{figure}

\section{Conclusions \label{section:conclusions}}
This paper shows that ADMM can be used in order to dynamically, and in a distributed manner, follow the set of optimal beam-formers given that the channel varies slowly enough. This is done even though the strong convexity assumption is broken when the problem is written in a ready-to-distribute manner. We have presented a novel approach to show the tracking ability of an algorithm that does not rely on an explicit convergence rate and therefore, allows to us to relax the strong convexity requirement. In particular, the strong convexity requirement is replaced by continuity requirements on the optimal point with respect to the problem's parameters. Additionally, some insights regarding the effect of the step-size on the algorithm's tracking ability are provided.


\appendices
\section{\label{section:appendix1}}
\begin{proof} By writing the KKT conditions of the problem in \eqref{eq:finaldistribute}  it can be shown that it holds that $ \mathbf{E}^T\boldsymbol{\nu}^{\star} = \mathbf{0}$. Additionally, provided that $\boldsymbol{\nu}^{[0]}$ is initialized fulfilling $\mathbf{E}^T\boldsymbol{\nu}^{[0]} = 0$, we can guarantee that at each ADMM iterate, regardless of the channel, will satisfy $\mathbf{E}^T\boldsymbol{\nu}^{[i]}= 0$. This is intuitively sound since $\mathbf{E}^T\boldsymbol{\nu}$ implies that there is a pair of (estimated) dual multipliers that have the same absolute value but opposite sign which will be associated to the copies of the same interference values present at two base stations at a time. This can also be thought of as in \cite{Boyd2007} where the dual variables associated to a consistency constraint are shown to always be 0 when summed over the network. Given this fact, we have that
$||\mathbf{E}(\boldsymbol{\tau}^{[i]\star} - \boldsymbol{\tau}^{[i]}) + (\frac{\boldsymbol{\nu}^{[i]}}{\rho}-\frac{\boldsymbol{\nu}^{[i]\star}}{\rho})||^2 = 
||\mathbf{E}(\boldsymbol{\tau}^{[i]\star}-\boldsymbol{\tau}^{[i]})||^2 + \frac{1}{\rho^2}||\boldsymbol{\nu}^{[i]}-\boldsymbol{\nu}^{[i]\star}||^2$, since the cross product $(\frac{\boldsymbol{\nu}^{[i]}}{\rho} - \frac{\boldsymbol{\nu}^{[i]\star}}{\rho})^T(\mathbf{E}(\boldsymbol{\tau}^{[i]\star} - \boldsymbol{\tau}^{[i]})) = 0$. Note that this expression is nothing but the Lyapunov function scaled by $\frac{1}{\rho}$. Hence we have that $||\mathbf{E}\boldsymbol{\tau}^{[i]\star} - \frac{\boldsymbol{\nu}^{[i]\star}}{\rho} - (\mathbf{E}\boldsymbol{\tau}^{[i]} - \frac{\boldsymbol{\nu}^{[i]}}{\rho})||^2 \leq \frac{\mu_{\text{l}}}{\rho}$. In the sequel, we equivalently 
 rewrite \eqref{eq:objective} as
$\sum_{j\in \mathcal{U}(b)}||\mathbf{w}_{j}||^2 + \frac{\rho}{2}||\mathbf{t}_b - \mathbf{E}_b\boldsymbol{\tau}^{[i-1]} + \frac{\boldsymbol{\nu}_b^{[i-1]}}{\rho}||^2$ in order to use the just derived bound. Note that since the base stations do not share any variable (they share copies) solving each of the problems in \eqref{eq:ADMMdistr} locally at each of the base stations is equivalent to solving a problem where the feasible set is the Cartesian product of feasible sets and the objective function is nothing more than the sum of objective functions. This leads to the following optimization problem:
\begin{subequations}
\label{eq:appendixdistrib}
\begin{align}
&\underset{\mathbf{w},\mathbf{t}}{\text{min}} & ||\mathbf{w}||^2 + \frac{\rho}{2}||\mathbf{t} - \mathbf{E}\boldsymbol{\tau}^{[i-1]} + \frac{\boldsymbol{\nu}^{[i-1]}}{\rho}||^2 \\
&\text{s.t.} & (\mathbf{w},\mathbf{t}) \in \mathcal{WT}(\mathbf{H}^{[i]}),
\end{align}
\end{subequations}
where $\mathcal{WT}(\mathbf{H})$ defined in \eqref{eq:primalset} denotes the feasible set for all beam-formers and interference estimates, i.e. the constraints of all base stations in \eqref{eq:type1} and \eqref{eq:type2}.
Define for the sake of simplicity $\mathbf{y}^{[i-1]} \triangleq \mathbf{E}\boldsymbol{\tau}^{[i-1]} - \frac{\boldsymbol{\nu}^{[i-1]}}{\rho}$. Note that given the optimal set of dual multipliers and consistency variables the problem in \eqref{eq:appendixdistrib} yields the optimal solution to \eqref{eq:finaldistribute} and that the feasible set is only dependent on the channel $\mathbf{H}^{[i]}$ . Define the optimal parameter  $\mathbf{y}^{[i]\star} \triangleq \mathbf{E}\boldsymbol{\tau}^{\star}(\mathbf{H}^{[i]}) - \frac{\boldsymbol{\nu}^{[i]\star}(\mathbf{H}^{[i]})}{\rho}$. Then, the objective function can be equivalently replaced by $||\mathbf{w}||^2 + \frac{\rho}{2}||\mathbf{t}||^2 + \rho \mathbf{y}^T\mathbf{t} = f(\mathbf{w}) + g(\mathbf{t}) + h(\mathbf{t},\mathbf{y}), $ where $f(\mathbf{w}) = ||\mathbf{w}||^2,$ $g(\mathbf{t}) = \frac{\rho}{2}||\mathbf{t}||^2$ and finally $h(\mathbf{t},\mathbf{y}) = \rho \mathbf{y}^T\mathbf{t}$. 
Given the parameters $\mathbf{y}^{[i-1]}$, which is a concatenation of $\mathbf{y}_b^{[i-1]}$ defined earlier as $\mathbf{E}_b\boldsymbol{\tau}^{[i-1]} - \frac{\boldsymbol{\nu}_b^{[i-1]}}{\rho}$ , we have that
\begin{equation}
\begin{aligned}
\label{eq:1stconvex}
\nabla_{\mathbf{w}}f(\mathbf{w}^{[i]})^T(\mathbf{w}^{[i]\star} - \mathbf{w}^{[i]}) + \nabla_{\mathbf{t}}g(\mathbf{t}^{[i]})^T(\mathbf{t}^{[i]\star} - \mathbf{t}^{[i]}) + \\
+ \nabla_{\mathbf{t}}h(\mathbf{t}^{[i]},\mathbf{y}^{[i-1]})^T(\mathbf{t}^{[i]\star} - \mathbf{t}^{[i]}) \geq 0,
\end{aligned}
\end{equation}
where $(\mathbf{w}^{[i]},\mathbf{t}^{[i]})$ is optimal given $\mathbf{y}^{[i-1]}$ and $(\mathbf{w}^{[i]\star},\mathbf{t}^{[i]\star})$ is optimal given $\mathbf{y}^{[i]\star}$. We also have
\begin{equation}
\begin{aligned}
\label{eq:2ndconvex}
\nabla_{\mathbf{w}}f(\mathbf{w}^{[i]\star})^T(\mathbf{w}^{[i]} - \mathbf{w}^{[i]\star}) + 
\nabla_{\mathbf{t}}g(\mathbf{t}^{[i]\star})^T(\mathbf{t}^{[i]} - \mathbf{t}^{[i]\star}) + \\
+ \nabla_{\mathbf{t}}h(\mathbf{t}^{[i]\star},\mathbf{y}^{[i]\star})^T(\mathbf{t}^{[i]} - \mathbf{t}^{[i]\star}) \geq 0.
\end{aligned}
\end{equation}
By adding \eqref{eq:1stconvex} and \eqref{eq:2ndconvex}, and using the strong convexity of $f$ and $g$
\begin{equation}
\begin{aligned}
&(\mathbf{y}^{[i]\star} - \mathbf{y}^{[i-1]})^T(\mathbf{t}^{[i]\star} - \mathbf{t}^{[i]}) \geq  \\
& ||\mathbf{w}^{[i]} - \mathbf{w}^{[i]\star}||^2 + ||\mathbf{t}^{[i]} - \mathbf{t}^{[i]\star}||^2.
\end{aligned}
\end{equation}
In turn, the first term can be upper bounded by
\begin{equation}
\label{eq:processbound}
\begin{aligned}
||\mathbf{t}^{[i]\star} - \mathbf{t}^{[i]}|| ||\mathbf{y}^{[i]\star} - \mathbf{y}^{[i-1]}|| \leq \sqrt{c}(||\mathbf{t}^{[i]\star} - \mathbf{E}\boldsymbol{\tau}^{[i]\star}|| \\ + ||\mathbf{E}(\boldsymbol{\tau}^{[i]\star}-\boldsymbol{\tau}^{[i]\star})|| + ||\mathbf{t}^{[i]} - \mathbf{E}\boldsymbol{\tau}^{[i]}||),
\end{aligned}
\end{equation}
where $c$ represents the bound on the Lyapunov function as in \eqref{eq:finalboundlyapunov}.  Note that the first term in the RHS of \eqref{eq:processbound} is 0 since we are dealing with optimal points. Additionally, the second term can be again bounded by $\sqrt{c}$. The third term is a scaled version of the primal residual and can be also bounded by the Lyapunov function, since one can not perform a decrease larger than its current value. Hence, we conclude that
\begin{equation}
||\mathbf{w}^{[i]} - \mathbf{w}^{[i]\star}||^2 + ||\mathbf{t}^{[i]} - \mathbf{t}^{[i]\star}||^2 \leq \left(1 + \frac{1}{\sqrt{\rho}}\right)c,
\end{equation}
for $i \rightarrow \infty$ and hence, given that $\mathbf{w}^{[i]}$ is a vectorized version of $\mathbf{W}^{[i]}$ we have
\begin{equation}
\begin{aligned}
&\underset{i \rightarrow \infty}{\text{lim sup}} &||\mathbf{W}^{[i]} - \mathbf{W}^{[i]\star}||^2_{\mathrm{F}} \leq \left(1 + \frac{1}{\rho}\right)c
\end{aligned}
\end{equation}
\end{proof}
\section{\label{section:appendix2}}
\begin{proof} After iteration $i$ using the corresponding channels $\mathbf{H}^{[i]}$, each base station has found a set of beam-formers and local copies of interference values $t_{mk}^{(b)}$ that fulfill the SINR constraints tightly. However, since before convergence ADMM does not guarantee primal feasibility, the local interference estimate might not match the perceived interference when the obtained beam-formers $\mathbf{W}^{[i]}$ are used, i.e. different base-stations may disagree on how much they are interfering each other and hence the interfering base station will cause more interference than predicted by the base station whose user is suffering the interference. We therefore aim to find the worst case perceived SINR. The proof will be performed for user $k$ associated to base station $b$. In particular we have that base station $b$ has performed an ADMM step yielding beam-formers and interference values such that
\begin{equation}
\label{eq:idealsinr}
\frac{|\mathbf{h}_{b(k)k}^H\mathbf{w}_k|^2}{\sum_{i \in \mathcal{U}(b(k)) \setminus k} |\mathbf{h}_{b(k)k}^H\mathbf{w}_i|^2 + \sum_{m \neq b}t_{mk}^{(b)2} + \sigma_k^2} = \gamma_k.
\end{equation}
However, the perceived SINR satisfies
\begin{equation}
\label{eq:perceivedsinr}
\frac{|\mathbf{h}_{b(k)k}^H\mathbf{w}_k|^2}{\sum_{i \in \mathcal{U}(b(k)) \setminus k} |\mathbf{h}_{b(k)k}^H\mathbf{w}_i|^2 + \sum_{m \neq b}t_{mk}^{(m)2} + \sigma_k^2} \geq \gamma_k - \epsilon_{2k},
\end{equation}
where $\epsilon_{2k}$ is the loss of SINR at user $k$ and is the quantity we wish to upper bound. For notational simplicity we define $\mathbf{t}_{bk}'$ and $\mathbf{t}_{mk}'$ as the vectors containing in each of their components the interference estimates of base station $b$ appearing in \eqref{eq:idealsinr} and analogously for $\mathbf{t}_{mk}'$ with \eqref{eq:perceivedsinr}, implying that $||\mathbf{t}_{bk}'||^2 = \sum_{m \neq b}t_{mk}^{(b)2}$ and $||\mathbf{t}_{mk}'||^2 = \sum_{m \neq b} t_{mk}^{(m)2}$.

By writing the difference between \eqref{eq:idealsinr} and \eqref{eq:perceivedsinr} and simplifying we obtain
\begin{subequations}
\begin{align}
\frac{\gamma_k (||\mathbf{t}_{mk}'||^2 - ||\mathbf{t}_{bk}'||^2)}{\sum_{i \in \mathcal{U}(b(k)) \setminus k} |\mathbf{h}_{b(k)k}^H\mathbf{w}_i|^2 + ||\mathbf{t}_{mk}'||^2 + \sigma_k^2} \leq \\
\frac{\gamma_k (||\mathbf{t}_{mk}'||^2 - ||\mathbf{t}_{bk}'||^2)}{ ||\mathbf{t}_{mk}'||^2 + \sigma_k^2}
\end{align}
\end{subequations}
where we have used that $||\mathbf{t}_{mk}'|| \geq ||\mathbf{t}_{bk}'||$ since we are interested in bounding the worst case scenario. In particular, the worst perceived interference, by a specific user, will occur when the user is expected to not be interfered at all, i.e. zero forced by other base stations, but it is however interfered. Recall now, that the primal residual, i.e. $||\mathbf{t} - \mathbf{E}\boldsymbol{\tau}||^2$ acts as a lower bound in the Lyapunov's function decrease \eqref{eq:decreaselowerbound}. Hence, the primal residual can not attain a value larger than the Lyapunov function itself. Additionally it has been shown in Section \ref{section:trackingwithadmm} that \eqref{eq:limitlyapunov} holds. We have also seen that when the channel changes the Lyapunov function with no update can be upper bounded as follows:
\begin{equation}
V(\boldsymbol{\nu}^{[i]},\boldsymbol{\tau}^{[i]},\mathbf{H}^{[i+1]}) \leq c, 
\end{equation}
where $c$ is defined in \eqref{eq:finalboundlyapunov}. Hence, the primal residual can not attain values larger than $c$. Consequently the term $||\mathbf{t}_{mk}' - \mathbf{t}_{bk}'||^2 \leq \frac{4 c}{\rho}$. We then have that
\begin{subequations}
\begin{align}
\label{eq:beforesplit}
\frac{\gamma_k (||\mathbf{t}_{mk}'||^2 - ||\mathbf{t}_{bk}'||^2))}{\ ||\mathbf{t}_{mk}'||^2 + \sigma_k^2} \leq \\
\frac{\gamma_k||\mathbf{t}_{mk}'||^2}{||\mathbf{t}_{mk'}||^2 + \sigma_k^2} \leq \frac{\gamma_k 4c}{\rho\sigma_k^2 + 4c}.
\end{align}
\end{subequations}
Thus yielding the upper bound
\begin{equation}
 \epsilon_{2k} \leq \frac{\gamma_k 4 \epsilon_1}{\rho \sigma_k^2 + 4\epsilon_1}.
\end{equation}
\end{proof}

\section*{Acknowledgment}
The authors would like to thank Prof. W. Yu for helpful discussions regarding the proof of Theorem 1 in \cite{Dahrouj2010}.
\bibliographystyle{IEEEbib.bst}
\bibliography{MyCollection}
\ifCLASSOPTIONcaptionsoff
  \newpage
\fi

\end{document}